\newcommand{\V}{\ensuremath{\vspace{.1in}}}
\newcommand{\R}{\ensuremath{\mathbb{R}}}
\newcommand{\C}{\ensuremath{\mathbb{C}}}
\newcommand{\N}{\ensuremath{\mathbb{N}}}
\newcommand{\Z}{\ensuremath{\mathbb{Z}}}
\newcommand{\Q}{\ensuremath{\mathbb{Q}}}
\theoremstyle{definition}
\theoremstyle{remark}
\theoremstyle{plain}
\newtheorem{theorem}{Theorem}[section]
\newtheorem{lemma}{Lemma}[section]
\newtheorem{prop}[theorem]{Proposition}
\newtheorem{kor}{Corollary}[theorem]
\newtheorem{corlem}{Corollary}[lemma]
\newtheorem{propcor}{Corollary}[theorem]
\theoremstyle{definition}
\title{ A Generalization of Wantzel's Theorem,\\ m-sectable angles,\\and\\the  density of certain Chebyshev-polynomial images}
\thanks{2010 \emph{Mathematics Subject Classification} 11R45, 12D10, 11Z05, 51M04, 51N20.
 Key words: \emph{angle m-section, algebraic numbers, height, density, Chebyshev polynomials, constructible numbers}.}
\author{Peter J. Kahn}
\begin{document}

\maketitle
\markboth{\large{Peter J. Kahn}}{\large {A Generalization of Wantzel's Theorem, etc.}}

\vspace{.2in}

\begin{center}
Department of Mathematics\\
Cornell University\\
Ithaca, New York 14853\\
January 2, 2014\\
\end{center}

\vspace{.2in}

\begin{abstract}
The eponymous theorem of P.L. Wantzel \cite{wan}  presents a necessary and sufficient  criterion for angle trisectability in terms of the third Chebyshev polynomial $T_3$, thus making it easy to prove that there exist non-trisectable angles. We generalize this theorem to the case of all Chebyshev polynomials $T_m$ (Corollary \ref{C:generalized wantzel}).  We also study the set \textbf{m-Sect} consisting of all cosines of $m$-sectable angles (see \S 1), showing  that, when $m$ is not a power of two, \textbf{m-Sect} contains only algebraic numbers (Theorem \ref{T:algebraicity}). We then introduce a notion of density based on the diophantine-geometric concept of height of an algebraic number and obtain a result on the density of certain polynomial images.  Using this in conjunction with the Generalized Wantzel Theorem, we obtain our main result: for every real algebraic number field $K$,  the set  \textbf{m-Sect}\ $\cap\ K$  has  density zero in  $[-1,1]\ \cap\  K$  when $m$ is not a power of two (Corollary \ref{C:m-sect estimate}).
\end{abstract}

\section{Introduction}

This paper poses and answers some interesting algebraic questions raised by P.L. Wantzel's 1837 theorem that destroyed the age-old hope of finding a ``ruler and compass'' construction for angle-trisection.\footnote{See \cite{k}, pp. 2, 3, for a thumbnail sketch of the history of this problem leading to Wantzel's work.}  More precisely, Wantzel \cite{wan} proved the following result:

%\V

\noindent\textbf{Wantzel's Theorem:}\quad \emph{Let $\alpha$ be any angle, and set $\cos(\alpha)=a$.\footnote{We continue with this notation throughout this introduction.}  Then $\alpha$ admits a trisection using only an unmarked straightedge and compass if and only if the polynomial $4x^3-3x-a$ has a zero in the field $\Q(a)$.}

%\V

It is easy to see, as Wantzel did, that when $4x^3-3x-a$ satisfies the algebraic criterion of his theorem, the number $a$ must be algebraic.\footnote{The converse, however, is false.  For example, there are infinitely many non-trisectable angles whose cosines are rational numbers.  E.g, see Lemma 2.4 (b) below, or see \cite{k}, p. 8 ff.}  Thus, many (in fact, most) angles are not trisectable.

%\V

Here are four questions suggested by Wantzel's Theorem.

%\V

The first question involves extending or generalizing the theorem.  Let $m$ be any positive integer.  We say that $\alpha$ is $m$-sectable if it admits an $m$-fold equipartition by a construction that uses only an unmarked straightedge and compass.  When $m=2$ (resp., $m=3$) we use the familiar terms bisectable (resp., trisectable) instead. For a given $m$,  we say that \emph{$m$-sectability always holds} if every angle is $m$-sectable. Otherwise we say that \emph{$m$-sectability sometimes fails}. Wantzel's Theorem shows that trisectability sometimes fails.  We can now ask the following:

%\V

\textbf{(A)}\quad \emph{Can we extend Wantzel's Theorem to the case of $m$-sectability, for $m>3$?}

%\V

This question requires some preliminary discussion, which we defer.  Instead we ask an easier question:

%\V

\textbf{(B)}\quad  \emph{Suppose $\alpha$ is $m$-sectable for some $m$.  Must $a=\cos(\alpha)$ be an algebraic number?}

%\V

This has a fairly easy, direct answer:

%\V

\begin{theorem}~\label{T:algebraicity}  If $m$ is a power of two, then $m$-sectability always holds.  In other words,  the quantity $a$ can assume any value in the unit interval $[-1,1]$. However,  when $m$ is not a power of two, $\alpha$ is $m$-sectable only if $a$ is an algebraic number in $[-1,1]$.
\end{theorem}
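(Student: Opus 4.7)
The plan is to split the proof along the dichotomy given in the statement. The easy half is the power-of-two case: since angle bisection is a standard straightedge-and-compass construction, iterating bisection $k$ times produces a $2^k$-section of any given angle. So if $m = 2^k$, every angle is $m$-sectable and $a = \cos(\alpha)$ is unrestricted in $[-1,1]$.

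For the substantive half, I want to argue by contradiction: assume $m$ is not a power of two, $\alpha$ is $m$-sectable, and $a = \cos(\alpha)$ is transcendental over $\Q$, and derive a contradiction. The input to the construction is $a$, so the set of numbers that can arise from a straightedge-and-compass construction starting from $a$ is the closure of $\Q(a)$ under extraction of square roots. In particular, if $\beta = \alpha/m$ is constructible from $\alpha$, then $\cos(\beta)$ lies in a tower of quadratic extensions over $\Q(a)$, so
\[
[\Q(a)(\cos\beta):\Q(a)]
\]
must be a power of two. The key observation that links the trigonometry to polynomial algebra is the Chebyshev identity $\cos(\alpha) = T_m(\cos\beta)$, which says that $\cos\beta$ is a root of the polynomial $T_m(x) - a \in \Q(a)[x]$, a polynomial of degree $m$ in $x$.

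The heart of the argument, and the step I expect to require the most care, is showing that when $a$ is transcendental over $\Q$ the polynomial $T_m(x) - a$ is in fact the minimal polynomial of $\cos\beta$ over $\Q(a)$, i.e.\ that $T_m(x) - a$ is irreducible in $\Q(a)[x]$. For this I would view $T_m(x)-a$ as an element of $\Q[x][a]$ and observe that as a polynomial in $a$ it is linear with unit leading coefficient $-1$. Hence it is irreducible in $\Q[x][a]$, and by Gauss's lemma it is irreducible in $\Q(x)[a]$, and finally, being primitive in $x$ as well (the coefficient of $x^m$ in $T_m$ is $2^{m-1}$), it is irreducible in $\Q(a)[x]$.

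Once irreducibility is established, the degree argument finishes things off: we get $[\Q(a)(\cos\beta):\Q(a)] = m$, and this must be a power of two, contradicting our hypothesis on $m$. Therefore the transcendence assumption fails and $a$ must be algebraic whenever $m$ is not a power of two. The only delicate point in the write-up is being explicit about which constructible field is being used (namely the square-root closure of $\Q(a)$ rather than of $\Q$), because it is this enlargement of the base field that forces the degree count to involve $m$ rather than a fixed cyclotomic degree.
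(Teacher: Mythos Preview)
Your argument is correct, and the irreducibility step is handled differently from the paper. The paper first reduces to an odd prime divisor $p$ of $m$ (observing that $m$-sectability of $\alpha$ implies $p$-sectability), and then proves that $T_p(x)-a$ is irreducible over $\Q(a)$ for transcendental $a$ by a specialization argument: it first finds infinitely many rational values of $a$ for which $T_p(x)-a$ is Eisenstein at $p$ (using that all non-leading coefficients of $T_p$ are divisible by $p$), and then transports irreducibility to the transcendental case via $\Q[t]\to\Q$, $t\mapsto a$. Your route is more direct: you keep the original $m$ and observe that $T_m(x)-a$, viewed in $\Q[a,x]$, is linear in $a$ with unit leading coefficient, hence irreducible in $\Q[a][x]$; since its leading $x$-coefficient $2^{m-1}$ is a unit in $\Q[a]$, Gauss's lemma gives irreducibility in $\Q(a)[x]$. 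This bypasses both the reduction to primes and the Eisenstein/specialization machinery. One small remark on exposition: the detour through $\Q(x)[a]$ is unnecessary---the clean chain is $\Q[a,x]\to\Q[a][x]\to\Q(a)[x]$, with primitivity over $\Q[a]$ coming from the unit leading $x$-coefficient. Also, be careful to say that \emph{some} $m$-th part $\beta$ of $\alpha$ is constructible (not necessarily $\alpha/m$); this does not affect the argument, since any such $\beta$ still satisfies $T_m(\cos\beta)=a$.
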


%\V

The first sentence of the theorem is obviously true, since bisectability always holds. It is mentioned only for completeness.

%\V

We denote the field of algebraic numbers by $\overline{\Q}$, and we let \textbf{m-Sect} denote the set of cosines of $m$-sectable angles. By Theorem \ref{T:algebraicity}, when $m$ is not a power of two, we have an inclusion of countable sets
 \[ \mbox{\textbf{m-Sect}} \subseteq\overline{\Q}\cap [-1,1].\]
We now ask:

%\V

\textbf{(C)}\quad \emph{When $m$ is not a power of two, how densely is \textbf{m-Sect} distributed in  $\overline{\Q}\cap [-1,1]$?}

%\V

The notion of density that we use is tied to the concept of \emph{height}
of an algebraic number; we describe this briefly in \S 3. A comprehensive discussion of height may be found in \cite{lan}.  Here, we say only that, given an algebraic number field $K$,  there is a  function $H_K:K\rightarrow [1,\infty)$, called the \emph{height function on $K$}, with the important property that, for every real number $B\in [1,\infty)$, the set $H_K^{-1}[1,B]$ is non-empty and finite.

We consider sets $S\subseteq T\subseteq\C$ such that $1\in T$. We then define the \emph{$K$-density of $S$ in $T$} to be the limit as $B\rightarrow \infty$ of the quotients of finite cardinalities

\begin{equation}
\delta_K(S,T;B)= \frac{|S\cap H_K^{-1}[1,B]| }{|T\cap H_K^{-1}[1,B]|},
\end{equation}
provided this limit exists.  We denote the limit by $\delta_K(S, T)$.

%\V

\begin{theorem}~\label{T:density of m-sect}  Let $K$ be an algebraic number field $\subset \R$, and let $m$ be any positive integer.  Then, the $K$-density $\delta_K(\mathbf{m-Sect}, [-1,1])$ exists. It equals $1$ when $m$ is a power of two,  and it equals $0$ otherwise.
\end{theorem}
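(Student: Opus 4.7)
The plan treats the two cases separately.

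\textbf{Case $m$ a power of two.} By the first sentence of Theorem~\ref{T:algebraicity}, $\mathbf{m-Sect} = [-1,1]$, so the ratio $\delta_K(\mathbf{m-Sect},[-1,1];B)$ equals $1$ identically, and the limit is $1$.

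\textbf{Case $m$ not a power of two.} I would choose an odd prime $p$ dividing $m$. Since $\alpha/p = (m/p)\cdot(\alpha/m)$ is obtainable from $\alpha/m$ by iterated straightedge-and-compass angle addition, $m$-sectability implies $p$-sectability; therefore $\mathbf{m-Sect}\subseteq\mathbf{p-Sect}$, and it suffices to prove $\delta_K(\mathbf{p-Sect},[-1,1])=0$.

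By the Generalized Wantzel Theorem (Corollary~\ref{C:generalized wantzel}), for $p$ an odd prime $\alpha$ is $p$-sectable iff $T_p(x)-a$ has a root in $\Q(a)$. (One sees this directly: $\cos(\alpha/p)$ is a root of the degree-$p$ polynomial $T_p(x)-a$, so $[\Q(a)(\cos(\alpha/p)):\Q(a)]$ divides $p$; if $\alpha$ is $p$-sectable the same degree must be a power of $2$, forcing it to equal $1$.) Hence every $a\in\mathbf{p-Sect}\cap K$ has the form $a=T_p(b)$ with $b\in\Q(a)\cap[-1,1]\subseteq K\cap[-1,1]$, that is,
\[
\mathbf{p-Sect}\cap K \;\subseteq\; T_p\bigl(K\cap[-1,1]\bigr).
\]
The remaining step is a counting argument governed by heights. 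Applying the standard polynomial-map height estimate (see e.g.\ \cite{lan}), there is a constant $C=C(p,K)>0$ with $H_K(b)^p\leq C\,H_K(T_p(b))$ whenever $H_K(T_p(b))\geq 1$. Consequently, if $a=T_p(b)$ and $H_K(a)\leq B$, then $H_K(b)\leq(CB)^{1/p}$, and since each fiber of $T_p$ has at most $p$ elements,
\[
\bigl|T_p(K\cap[-1,1])\cap H_K^{-1}[1,B]\bigr|\;\leq\; p\cdot\bigl|K\cap[-1,1]\cap H_K^{-1}[1,(CB)^{1/p}]\bigr|.
\]
Combined with a Schanuel-type polynomial growth rate $|K\cap[-1,1]\cap H_K^{-1}[1,B]|\asymp B^{\mu}$ for some $\mu=\mu(K)>0$ (presumably the ``result on the density of certain polynomial images'' flagged in the abstract), the ratio defining $\delta_K(\mathbf{p-Sect},[-1,1];B)$ is $O(B^{\mu(1/p-1)})$, which tends to $0$ as $B\to\infty$ since $p\geq 3$.

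I expect the main obstacle to be the polynomial growth rate for $|K\cap[-1,1]\cap H_K^{-1}[1,B]|$, because the restriction to the real interval $[-1,1]$ is not standard; this is presumably the technical heart of the paper's density-of-polynomial-images theorem, as opposed to the classical Northcott/Schanuel counts over all of $K$. The Wantzel reduction to $T_p$ and the height-contraction bound under a polynomial map are comparatively routine.
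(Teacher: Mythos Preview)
Your approach is essentially the paper's: invoke the Generalized Wantzel Theorem to obtain $\mathbf{m-Sect}\cap K\subseteq T_{m_{odd}}(K)\cap[-1,1]$ (you route through an odd prime divisor $p$ of $m$ rather than $m_{odd}$, a harmless simplification since only density zero is needed), then bound the image via the height-contraction estimate $H_K(b)^d\ll H_K(f(b))$ together with Schanuel's count. The factor $p$ in your fiber bound is unnecessary---$|f(S)|\le|S|$ already suffices---but does no damage.

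One genuine error, however: your parenthetical ``direct'' argument is wrong. From the fact that $\cos\beta$ is a root of the degree-$p$ polynomial $T_p(x)-a$ you infer that $[\Q(a)(\cos\beta):\Q(a)]$ \emph{divides} $p$; in fact you only know this degree is \emph{at most} $p$. Combined with ``power of $2$,'' that does not force degree $1$ once $p\ge 5$: for instance a quadratic irreducible factor of $T_5(x)-a$ would leave a cubic cofactor, with no root in $\Q(a)$ in sight. The odd-$m$ statement (Theorem~\ref{T:constructible zeros}(b)) really does require work: the paper lifts the real tower over $\Q(a)$ to a complex tower over $\Q(\alpha)$ (Lemma~\ref{L:tower lifting}) and then runs a Van der Waerden--style argument on the constant term of the minimal polynomial of $\beta$ over $\Q(\alpha)$ to produce an $m^{\mathrm{th}}$ root of $\alpha$ inside $\Q(\alpha)$. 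Since you already cite the Generalized Wantzel Theorem, your proof survives; just delete the parenthetical.

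The obstacle you flag---Schanuel-type growth for $|K\cap[-1,1]\cap H_K^{-1}[1,B]|$---turns out to be elementary rather than the technical heart. Because $H_K(x)=H_K(x^{-1})$, inversion $x\mapsto x^{-1}$ gives a bijection between $H_K^{-1}[1,B]\cap K^{\ast}\cap[-1,1]$ and $H_K^{-1}[1,B]\cap(K^{\ast}\setminus(-1,1))$; these overlap exactly in $\{\pm 1\}$, whence $|H_K^{-1}[1,B]\cap[-1,1]|=\tfrac{1}{2}(|H_K^{-1}[1,B]|+3)$ (Proposition~\ref{P:intersecting with [-1,1]}), and the standard Schanuel estimate then applies directly.
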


%\V

 The first assertion in the last sentence is immediate from Theorem \ref{T:algebraicity}. The second  is a consequence of Corollary \ref{C:m-sect estimate} below. An analogue of this,  in which  the infinite-dimensional field $\overline{\Q}$ replaces $K$,  is still out of reach. If true, it seems to require numerical estimates more delicate than those used here (in \S 3). See the remark after Proposition \ref{P:polynomial image estimate} and Corollary \ref{C:m-sect estimate}.

%\V

We now return to question \textbf{(A)}.  The polynomial $4x^3-3x$ appearing in Wantzel's Theorem will no doubt be recognized by many as the \emph{third Chebyshev polynomial}
$T_3(x)$.  Its connection with angle trisectability is clearly a consequence of the well-known trigonometric identity $\cos(3\beta)= T_3(\cos(\beta))$; an analogue holds for every positive integer $m$ and every angle $\beta$ (cf., Lemma \ref{L:trig formula}).  Thus, it is natural to try to generalize Wantzel's Theorem to the case of $m$-sectability by replacing $T_3(x)$ by the $m^{th}$ Chebyshev polynomial $T_m(x)$.
Indeed, if we consult the theory of constructible  numbers, with which we assume the reader has some  familiarity, together with the basic definition and properties of $T_m(x)$ (cf., \S 2), we  see that the following is an immediate consequence of definitions:

%\V

\begin{prop}~\label{P:constructibility criterion} Recall that, for an angle $\alpha$, we set $a = \cos(\alpha)$.  Then $\alpha$ is $m$-sectable if and only if the  polynomial $T_m(x)- a$ has a zero that is constructible over the field $\Q(a)$.\quad$\square$
\end{prop}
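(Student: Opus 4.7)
The plan is to unwind the definitions of $m$-sectability and constructibility using the Chebyshev identity $T_m(\cos\beta)=\cos(m\beta)$ recorded as Lemma \ref{L:trig formula}, together with the standard dictionary of the theory of constructible numbers: starting from the angle $\alpha$, equivalently from the real number $a=\cos\alpha$, the real numbers obtainable by a finite sequence of straightedge-and-compass operations are exactly the elements of the constructible closure of $\Q(a)$ inside $\R$, and an angle is constructible precisely when its cosine lies in that closure.

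For the forward direction, I would assume $\alpha$ is $m$-sectable. The equipartition then provides a constructible angle $\beta$ with $m\beta\equiv\pm\alpha\pmod{2\pi}$. By the dictionary, $c:=\cos\beta$ lies in the constructible closure of $\Q(a)$, and the Chebyshev identity gives $T_m(c)=\cos(m\beta)=\cos\alpha=a$, so $c$ is a zero of $T_m(x)-a$ constructible over $\Q(a)$.

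For the converse, suppose $T_m(x)-a$ has a zero $c$ that is constructible over $\Q(a)$. The $m$ numbers $\cos((\alpha+2\pi k)/m)$, $k=0,\dots,m-1$, already provide $m$ real roots of the degree-$m$ polynomial $T_m(x)-a$ in $[-1,1]$, so every zero is real and lies in $[-1,1]$; in particular $c=\cos\beta$ for some $\beta$ with $m\beta\equiv\pm\alpha\pmod{2\pi}$. Constructibility of $c$ over $\Q(a)$ yields constructibility of the angle $\beta$, and transferring $\beta$ end-to-end $m$ times then partitions $\alpha$ into $m$ congruent angles (possibly after winding around the circle a full number of times), so $\alpha$ is $m$-sectable.

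The only point that merits more care than the phrase ``immediate consequence of definitions'' suggests is the remark that \emph{any} $\beta$ satisfying $m\beta\equiv\pm\alpha\pmod{2\pi}$ suffices to equipartition $\alpha$, not solely the canonical choice $\beta=\alpha/m$; this is legitimate because the whole setup is insensitive to replacing $\alpha$ by $-\alpha$ or by $\alpha+2\pi k$, all this data being packaged in $a=\cos\alpha$. I expect no genuine obstacle beyond this bookkeeping: the proof is essentially the Chebyshev identity read in both directions, coupled with the constructible-number dictionary.
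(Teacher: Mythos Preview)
Your argument is correct and is precisely what the paper means by ``immediate consequence of definitions'': the paper gives no proof beyond the tombstone, and your unwinding via the Chebyshev identity (Lemma~\ref{L:trig formula}) together with the constructible-number dictionary is exactly the intended content. Two cosmetic remarks: the paper works multiplicatively with angles as unit complex numbers (so $\beta^m=\alpha$ rather than $m\beta\equiv\alpha$), and for the fact that every zero of $T_m(x)-a$ lies in $[-1,1]$ the paper would invoke Lemma~\ref{L:chebyshev preserves unit interval} rather than exhibit the roots explicitly---your root-counting argument works too, though it needs a word when $a=\pm 1$ and some of the listed cosines coincide.
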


%\V

One feature of a real number constructible over a field $F\subset \R$ is that it is algebraic over $F$ with minimal polynomial of degree a power of two.  Thus,  if a real zero of the cubic polynomial $T_3(x)- a$  is constructible over $\Q(a)$,  then $T_3(x)- a$ must have a linear factor over
$\Q(a)$, i.e., a zero \emph{that belongs to} $\Q(a)$. Therefore, Wantzel's Theorem follows from  Proposition \ref{P:constructibility criterion}.  This is not surprising since Wantzel's argument (which predated the development of field theory) amounts to an analysis of the notion of ruler and compass constructibility in algebraic terms, essentially equivalent to the modern formulation.

%\V

(We note in passing that the case in which $m$ is a power of two is not an exception to Proposition \ref{P:constructibility criterion}.  For in that case, every angle is $m$-sectable because bisection always holds. And,  moreover, in that case,  $T_m(x)-a$ has a zero constructible over $\Q(a)$ for any $a\in [-1,1]$, as is easy to show inductively using well-known facts about Chebyshev polynomials (cf. \S 2).)

%\V

Of significance for us here is that we are interested in answering question \textbf{(C)}, and for that purpose the formulation in Wantzel's Theorem is much more useful than the formulation in Proposition \ref{P:constructibility criterion}.  This is because the assertion ``$T_m(x)-a$ has a zero in $\Q(a)$'' can be rewritten  as an assertion about the image of the polynomial $T_m|\Q(a)$: namely, ``$a\in T_m(\Q(a))$.'' As a consequence,  when the apparently stricter criterion holds, we are able to to answer question \textbf{(C)} by obtaining and applying a somewhat general result about the density of polynomial images (Proposition \ref{P:polynomial image estimate} below).  The criterion that $T_m(x)-a$ have a zero constructible over $\Q(a)$ does not seem to allow such a straightforward application.

%\V

In light of this discussion, we are led to ask the following final question:

%\V

\textbf{(D)}\quad \emph{ Suppose that $T_m(x)-a$ has a zero constructible over
$\Q(a)$.  Under what conditions on $m$ can we conclude that $T_m(x)-a$ has a zero belonging to $\Q(a)$?}

%\V

The following result gives a definitive answer to this question.

%\V

\begin{theorem}~\label{T:constructible zeros}\quad  (a)  Suppose that $m$ is even.  Then there exist rational numbers $a$ in $[-1,1]$  such that $T_m(x)-a$ has a zero constructible over $\Q(a)=\Q$  but no zero in \Q.
(b) Suppose $m$ is odd, and let $a$ be any real number in $[-1,1]$.   $T_m(x)-a$ has a zero that is constructible over $\Q(a)$ if and only if it has a zero in $\Q(a)$.
\end{theorem}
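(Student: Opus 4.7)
My plan: prove (b) by Galois theory, and handle (a) by exhibiting a uniform rational witness.

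\textbf{For (b)}: let $L$ be the splitting field of $T_m(X)-a$ over $K := \Q(a)$ and let $G = \mathrm{Gal}(L/K)$. The key structural input is the classical fact that $G$ embeds in the dihedral group $D_m$ of order $2m$, via its natural action on the $m$ roots of $T_m(X)-a$. This follows from the substitution $X=(Y+Y^{-1})/2$, which transforms $T_m(X)=a$ into $Y^{2m}-2aY^m+1=0$; the Galois action on the $Y$-roots is generated by $Y\mapsto\zeta Y$ for a primitive $m$-th root of unity $\zeta$ and by $Y\mapsto Y^{-1}$, two generators of $D_m$, and the $X$-roots are the invariant combinations $(Y+Y^{-1})/2$. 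For $m$ odd every divisor of $m$ is odd, so each subgroup of $D_m$ (isomorphic to $C_d$ or $D_d$ for some $d\mid m$) produces, on the $m$-element root set, orbits only of size $d$ in the cyclic case, or of sizes $d$ and $2d$ in the dihedral case; the sole exception is a subgroup generated by a single reflection ($D_1 = C_2$), whose orbits consist of one fixed point plus $(m-1)/2$ pairs. Hence an orbit whose size is a power of $2$ must have size $1$ or $2$, and in the size-$2$ case $G$ is a single-reflection subgroup and thus has a fixed point. Now if $x_0$ is a zero of $T_m(X)-a$ constructible over $K$, then $[K(x_0):K]$---which equals the size of the $G$-orbit of $x_0$---must be a power of $2$ by the standard constructibility criterion; the preceding classification forces $G$ to have a fixed root, and that fixed root is a zero of $T_m(X)-a$ lying in $K$. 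The converse (rational zero implies constructible zero) is trivial.

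\textbf{For (a)}: take $a=0$ for every even $m$. Writing $m=2^e k$ with $k$ odd and $e\geq 1$, let $x_0 = \cos(\pi/2^{e+1})$, which is constructible over $\Q$ by iterating the half-angle formula from $\cos(\pi/2)=0$; and $T_m(x_0)=\cos(k\pi/2)=0$ because $k$ is odd, making $x_0$ a constructible zero of $T_m(X)-0$. The real zeros of $T_m(X)$ are $\cos((2j+1)\pi/(2m))$ for $j=0,\ldots,m-1$; by Niven's theorem the only rational values of $\cos(q\pi)$ for $q\in\Q$ are $0,\pm 1,\pm 1/2$, and a short parity check (each would equate an odd integer $2j+1$ to an even multiple of $m$) rules out every one of these as $\cos((2j+1)\pi/(2m))$ when $m$ is even.

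\textbf{Main obstacle}: the principal non-elementary step is the embedding $G\hookrightarrow D_m$ in (b), a classical result about Chebyshev polynomials that must be invoked (or re-proved) carefully. Once it is in hand, the subgroup-and-orbit enumeration is elementary, as is everything in (a).
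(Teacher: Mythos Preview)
Your argument for part (a) is correct and is actually cleaner than the paper's: the paper constructs a specific $a$ via $T_m(\sqrt{2/3})$ and analyzes it with $3$-adic valuations, whereas your uniform choice $a=0$ together with Niven's theorem handles all even $m$ at once. Nice.

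Part (b), however, has a genuine gap: the embedding $G\hookrightarrow D_m$ that you invoke is \emph{false} over $K=\Q(a)$. Your justification via the $Y$-substitution tacitly assumes that a primitive $m$-th root of unity $\zeta$ lies in the base field, so that the only Galois automorphisms are $Y\mapsto \zeta Y$ and $Y\mapsto Y^{-1}$. But $\zeta\notin\Q(a)$ for $m\ge 3$, and automorphisms moving $\zeta$ produce permutations of the roots $b_j=\cos(\theta+2\pi j/m)$ of the form $j\mapsto \pm kj+c$ with $k\in(\Z/m\Z)^{*}$ arbitrary, not just $k=1$. Thus $G$ embeds only in the affine group $\mathrm{Aff}(\Z/m\Z)=\Z/m\Z\rtimes(\Z/m\Z)^{*}$, not in $D_m$. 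Concretely, take $m=7$ and $a\in\Q$ with $T_7(X)-a$ irreducible (such $a$ exist by the paper's Lemma~2.4(b)). Then for any root $b_0\ne 0$ one has $b_1+b_{-1}=2b_0\cos(2\pi/7)$, so $\cos(2\pi/7)\in L$; since $[\Q(\cos(2\pi/7)):\Q]=3$ and $[\Q(b_0):\Q]=7$, we get $|G|=[L:\Q]\ge 21>14=|D_7|$. So $G$ cannot sit inside $D_7$, and your subgroup-and-orbit enumeration, which is carried out only for subgroups of $D_m$, does not cover the actual Galois group. (The classical ``dihedral Galois group'' statement for Chebyshev polynomials holds over $\overline{\Q}(t)$ or $\C(t)$, not over $\Q(a)$.)

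The paper avoids this difficulty entirely: rather than analyzing the global Galois group, it passes to the complex picture via a ``tower-lifting'' lemma, shows that the constructible real root $b$ corresponds to an angle $\beta$ with $[\Q(\beta):\Q(\alpha)]=2^j$ and $\beta^m=\alpha$, and then uses a Van der Waerden--style trick: the constant term $\lambda$ of the minimal polynomial of $\beta$ over $\Q(\alpha)$ satisfies $\lambda^m=\alpha^{2^j}$, and since $\gcd(2^j,m)=1$ one extracts an $m$-th root of $\alpha$ in $\Q(\alpha)$, which translates back to a root of $T_m(x)-a$ in $\Q(a)$. This sidesteps any need to control the full splitting field.
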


%\V

We  now use this result to obtain a generalization of Wantzel's Theorem to the case of $m$-sectable angles \underline{for all m} (statement (b) of the following corollary).  This is our answer to question \textbf{(A)}.

%\V

\begin{kor}~\label{C:generalized wantzel} (a)\quad When $m$ is even, there exist $m$-sectable angles $\alpha$ such that $a$ is rational but $T_m(x)-a$ has no rational zero. (b)\quad   \hbox{\textbf{\emph{[A Generalized Wantzel Theorem]}}}\quad  Let $m$ be any positive integer, and let $m_{odd}$ be the maximal odd divisor of $m$. Then, $\alpha$ is $m$-sectable if and only if the  polynomial $T_{m_{odd}}(x)-a$ has a zero in $\Q(a)$.
\end{kor}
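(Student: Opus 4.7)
The plan is to derive both assertions by combining Theorem \ref{T:constructible zeros} with Proposition \ref{P:constructibility criterion}. Part (a) is essentially immediate: Theorem \ref{T:constructible zeros}(a) supplies a rational number $a \in [-1,1]$ for which $T_m(x)-a$ has a constructible zero over $\Q = \Q(a)$ but no zero in $\Q$. Applying Proposition \ref{P:constructibility criterion}, any angle $\alpha$ with $\cos(\alpha) = a$ is then $m$-sectable, while by construction $T_m(x)-a$ has no rational root.

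For part (b), I would write $m = 2^k\, m_{odd}$. The key preliminary step is to establish the equivalence that $\alpha$ is $m$-sectable if and only if $\alpha$ is $m_{odd}$-sectable. For this I would argue that $\cos(\alpha/m)$ and $\cos(\alpha/m_{odd})$ are interconvertible by constructible operations over $\Q(a)$: one direction uses the polynomial identity $\cos(\alpha/m_{odd}) = T_{2^k}(\cos(\alpha/m))$, since $\alpha/m_{odd} = 2^k \cdot (\alpha/m)$; the other uses $k$ successive applications of the half-angle formula $\cos(\theta/2) = \pm\sqrt{(1+\cos(\theta))/2}$, each of which involves only a square-root extraction and so preserves constructibility. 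Given this equivalence, Proposition \ref{P:constructibility criterion} applied with $m_{odd}$ in place of $m$ translates $m_{odd}$-sectability into the assertion that $T_{m_{odd}}(x)-a$ has a zero constructible over $\Q(a)$. Since $m_{odd}$ is odd, Theorem \ref{T:constructible zeros}(b) then converts this to the desired statement that $T_{m_{odd}}(x)-a$ has a zero in $\Q(a)$.

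The main substance has been absorbed into Theorem \ref{T:constructible zeros}(b); what remains for this corollary is essentially a chain of reductions using standard trigonometric identities. The only delicate point is verifying the converse half-angle step carefully: in particular, that a real zero $b$ of $T_{m_{odd}}(x)-a$ produced by the argument automatically lies in $[-1,1]$, so that it is genuinely of the form $\cos(\beta)$ for a real angle $\beta$ that can then be bisected. This follows immediately from the well-known fact that $|T_{m_{odd}}(t)| > 1$ whenever $|t| > 1$, combined with $|a| \le 1$.
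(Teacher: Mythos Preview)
Your proposal is correct and follows essentially the same approach as the paper: part (a) is deduced from Theorem~\ref{T:constructible zeros}(a) together with Proposition~\ref{P:constructibility criterion}, and part (b) combines the equivalence ``$\alpha$ is $m$-sectable $\Leftrightarrow$ $\alpha$ is $m_{odd}$-sectable'' with Proposition~\ref{P:constructibility criterion} and Theorem~\ref{T:constructible zeros}(b). The paper dispatches the $m\leftrightarrow m_{odd}$ equivalence in a single sentence (``in one direction this is obvious; in the other direction it is true because bisection always holds''), whereas you spell out the mechanism via $T_{2^k}$ and the half-angle formula; your closing remark about the zero lying in $[-1,1]$ is the content of Lemma~\ref{L:chebyshev preserves unit interval}, though strictly speaking it is already absorbed into Proposition~\ref{P:constructibility criterion} and need not be re-verified here.
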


%\V

We now return to question \textbf{(C)}. We answer it by combining the Generalized Wantzel Theorem above with the following result on the density of polynomial images. This in turn is a fairly straightforward consequence of basic facts about heights.  We derive this result in \S 3 (Corollary \ref{C:polynomial image finite density estimate}).

%\V

\begin{prop}~\label{P:polynomial image estimate}\quad Let $K$ be an algebraic number field of degree $[K:\Q]=n$.   Choose any  polynomial $f(X)$ in $K[X]$ of degree $d\geq 1$.   Then, there exist positive real numbers $B_0$ and $E_0$, depending only on $f$ and $K$, such that, for $B\geq B_0$,
\begin{equation} \delta_K(f(K),K;B)\leq E_0\cdot B^{(2/d)-2}.
\end{equation}
Therefore,  when $d > 1$, the density $\delta_K(f(K),K)$ exists and equals zero.
\end{prop}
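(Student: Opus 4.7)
My strategy is to bound the numerator and denominator of
\[ \delta_K(f(K),K;B) \;=\; \frac{|f(K)\cap H_K^{-1}[1,B]|}{|K\cap H_K^{-1}[1,B]|} \]
separately, using two standard tools from the theory of heights developed in \cite{lan}: a Schanuel-type asymptotic for the count of $K$-rational points of bounded height, and the functorial behavior of $H_K$ under evaluation of a polynomial map.

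For the denominator, Schanuel's theorem applied to the affine line over $K$ yields positive constants $c_1,c_2$ depending only on $K$ such that (with the normalization of $H_K$ adopted in \S 3)
\[ c_1\,B^2 \;\le\; |K\cap H_K^{-1}[1,B]| \;\le\; c_2\,B^2 \]
for all sufficiently large $B$. For the numerator, I exploit the fact that a polynomial $f\in K[X]$ of degree $d$ extends to a morphism $\mathbb{P}^1\to\mathbb{P}^1$ of degree $d$; the standard morphism–height inequality then provides a constant $C_1>0$, depending only on $f$ and $K$, with
\[ H_K(f(x)) \;\ge\; C_1\,H_K(x)^d \qquad \text{for every } x\in K. \]
Consequently, every $y\in f(K)\cap H_K^{-1}[1,B]$ has the form $y=f(x)$ for some $x\in K$ with $H_K(x)\le (B/C_1)^{1/d}$, so applying the upper bound of Schanuel to the set of such $x$ yields
\[ |f(K)\cap H_K^{-1}[1,B]| \;\le\; c_2\,(B/C_1)^{2/d}. \]

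Dividing the two estimates produces $\delta_K(f(K),K;B)\le \bigl(c_2/(c_1\,C_1^{2/d})\bigr)\cdot B^{(2/d)-2}$, valid for $B$ beyond some threshold $B_0$, with $E_0 := c_2/(c_1\,C_1^{2/d})$ depending only on $f$ and $K$. When $d>1$ the exponent $(2/d)-2$ is negative, so the right-hand side tends to zero, giving both existence of the density and the value $0$. The only genuine obstacle is to pin down the two height estimates in exactly the normalization used by the paper; no deeper ingredient is needed, and the detailed bookkeeping can be left to \S 3.
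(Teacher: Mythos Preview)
Your proposal is correct and follows essentially the same route as the paper: both use the lower height bound $H_K(f(x))\ge C\,H_K(x)^d$ from \cite{lan} to show that $f(K)\cap H_K^{-1}[1,B]$ is contained in the $f$-image of $H_K^{-1}[1,C'B^{1/d}]$, and then apply Schanuel's asymptotic $|H_K^{-1}[1,B]|\sim \mathbf{S}_K B^2$ to numerator and denominator. The paper packages the first step as a separate corollary (the inequality $\delta_K(f(K),K;B)\le |H_K^{-1}[1,CB^{1/d}]|/|H_K^{-1}[1,B]|$) before invoking Schanuel, whereas you apply the two-sided Schanuel bounds directly, but this is only a cosmetic difference.
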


%\V

The following corollary yields Theorem \ref{T:density of m-sect}.

%\V

\begin{kor}~\label{C:m-sect estimate} Let $K$ be as in Proposition \ref{P:polynomial image estimate}, and suppose additionally that $K\subset \R$.  Assume that $m$ is a positive integer, and let $m_{odd}$ be its maximal odd divisor.  Then, there exist positive  real numbers $B_2$ and $E_2$, depending only on $m$ and $K$, such that if $B\geq B_2$, then
\[\delta_K(\mathbf{m-Sect}, [-1,1]; B) \leq E_2\cdot B^{(2/m_{odd} )-2}.\]
Therefore,  $\delta_K(\mathbf{m-Sect}, [-1,1])$ exists  for all $m$.  When $m_{odd}=1$ ---i.e., $m$ is a power of two--- we have already observed that the density equals 1.  When $m_{odd}> 1$, the above inequality implies that $\delta_K(\mathbf{m-Sect},[-1,1])=\lim_{B\rightarrow\infty}\delta_K(\mathbf{m-Sect}, [-1,1]; B)=0$.
\end{kor}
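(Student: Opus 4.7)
The strategy is to combine the Generalized Wantzel Theorem (Corollary~\ref{C:generalized wantzel}) with the polynomial image density estimate (Proposition~\ref{P:polynomial image estimate}), and then convert the denominator from $K$ to $[-1,1]\cap K$ via an elementary counting argument based on the height-preserving involution $a\mapsto 1/a$.

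First I would invoke the Generalized Wantzel Theorem: if $a\in K$ lies in $\mathbf{m-Sect}$, then $T_{m_{odd}}(x)-a$ has a zero in $\Q(a)\subseteq K$, so $a\in T_{m_{odd}}(\Q(a))\subseteq T_{m_{odd}}(K)$. Thus $\mathbf{m-Sect}\cap K\subseteq T_{m_{odd}}(K)$. Applying Proposition~\ref{P:polynomial image estimate} to $f=T_{m_{odd}}\in K[X]$, which has degree $m_{odd}$, produces constants $B_0,E_0>0$ depending only on $m$ and $K$ such that for $B\geq B_0$,
\[|\mathbf{m-Sect}\cap H_K^{-1}[1,B]|\leq|T_{m_{odd}}(K)\cap H_K^{-1}[1,B]|\leq E_0\cdot B^{(2/m_{odd})-2}\cdot|K\cap H_K^{-1}[1,B]|.\]

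It remains to replace $|K\cap H_K^{-1}[1,B]|$ by $|[-1,1]\cap K\cap H_K^{-1}[1,B]|$ up to a bounded multiplicative constant. For this I would exploit the standard identity $H_K(a)=H_K(1/a)$ for nonzero $a\in K$ (see \cite{lan}). The induced involution $a\mapsto 1/a$ on $K\setminus\{0,\pm 1\}$ is height-preserving and bijects the sets $\{|a|<1\}$ and $\{|a|>1\}$, so exactly half of these elements of bounded height lie in $(-1,1)$. This yields
\[|K\cap H_K^{-1}[1,B]|\leq 2\cdot|[-1,1]\cap K\cap H_K^{-1}[1,B]|+3,\]
which, for $B$ sufficiently large (say $B\geq B_2\geq B_0$, chosen so that the right-hand count already exceeds $3$), is itself bounded by $4\cdot|[-1,1]\cap K\cap H_K^{-1}[1,B]|$. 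Combining these estimates gives
\[\delta_K(\mathbf{m-Sect},[-1,1];B)\leq 4E_0\cdot B^{(2/m_{odd})-2},\]
which is the claimed inequality with $E_2=4E_0$. The assertions about the limit $\delta_K(\mathbf{m-Sect},[-1,1])$ follow at once: the exponent $(2/m_{odd})-2$ is negative when $m_{odd}>1$, forcing the density to $0$, while the $m_{odd}=1$ case is immediate from the first sentence of Theorem~\ref{T:algebraicity}.

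The only step beyond a direct combination of the already-stated results is the denominator conversion, whose core is the standard height identity $H_K(a)=H_K(1/a)$; this is the main (and essentially only) potential obstacle, and it is a basic fact about heights on number fields.
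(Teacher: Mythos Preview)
Your argument is correct and uses the same three ingredients as the paper: the Generalized Wantzel Theorem to get $\mathbf{m\text{-}Sect}\cap K\subseteq T_{m_{odd}}(K)$, the polynomial image estimate (Proposition~\ref{P:polynomial image estimate}), and the height-preserving inversion $a\mapsto 1/a$ to handle the $[-1,1]$ denominator. The paper's organization differs slightly: it first proves a ``relativized'' version of the polynomial image estimate, Corollary~\ref{C:intersecting with [-1,1]}, which bounds $\delta_K(f(K)\cap[-1,1],[-1,1];B)$ directly under the extra hypothesis that $|x|\le 1\Leftrightarrow |f(x)|\le 1$; it then invokes Lemma~\ref{L:chebyshev preserves unit interval} to verify this hypothesis for $T_{m_{odd}}$. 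Your route is marginally more economical: by bounding the numerator with the \emph{unrestricted} image count $|T_{m_{odd}}(K)\cap H_K^{-1}[1,B]|$ and only adjusting the denominator via the inversion trick, you never need the Chebyshev interval-preservation lemma at all. The paper's packaging, on the other hand, isolates a reusable statement (Corollary~\ref{C:intersecting with [-1,1]}) applying to any $[-1,1]$-preserving polynomial. Incidentally, the inversion count actually gives the exact identity $|H_K^{-1}[1,B]|=2\,|H_K^{-1}[1,B]\cap[-1,1]|-3$ (Proposition~\ref{P:intersecting with [-1,1]}), so your factor $4$ can be replaced by $2$ with no threshold adjustment needed.
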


  The proofs of Proposition \ref{P:polynomial image estimate} and   Corollary \ref{C:m-sect estimate} are given in \S 3.

 % \V

  \noindent\textbf{Remarks:}\quad (a)\quad  The estimates in the above proposition and corollary are based on a result of S. Lang (cf. \S 3.4, p.12) and a theorem of S. Schanuel  (cf. \S  3,  p.13, for a special case of this theorem).  The values of the  constants $B_0, E_0, B_2, E_2$ that appear above are not needed in this paper.  However, explicit values for $E_0$ and $E_2$ can be obtained using a more direct, detailed proof than Lang's.  The author plans a later paper in which these values appear. The values of $B_0$ and $B_2$ are more elusive, being absorbed in the ``big oh'' notation used in Schanuel's Theorem. If Theorem \ref{T:density of m-sect} and Corollary \ref{C:m-sect estimate} can be extended to the case in which the number field $K$ is replaced by  $\overline{\Q}$,  it will probably require a better understanding of $B_0$ and $B_2$, which will require a close analysis of the proof of Schanuel's Theorem.

%\V

(b)\quad The author wishes to thank Michael Stillman for a number of helpful conversations and for assisting with a series of  computations using the program \emph{Macaulay}.  These eventually suggested that something like Theorem 1.4 should be true.

\begin{comment}
\noindent\textbf{Remark:}\quad  It is possible to show that the constant $A$ may be taken to be equal to\\ $\left(2\sinh\left(m_{odd}\ell n(\sqrt{2}+1)\right)\right)^{2n}$.  Since we do not require this fact in this paper, a derivation, which is somewhat lengthy, is omitted. However, it will be useful in a subsequent paper in which we address the foregoing results when the field $K$ is allowed to vary.
\end{comment}
\V

\tableofcontents

\section{M-sectability of angles and Chebyshev polynomials}

\subsection{M-sectable angles and constructibility}\quad We first establish some notation and conventions about angles and then  remind the reader of some basic facts about constructibility.

    % \V

  We identify the Cartesian plane with the complex numbers and the $X$-axis with the real numbers in the usual way. \emph{From now on, when we wish to talk about an angle, we use instead the complex number on the unit circle, with which it is often identified.} Thus, we refer to a unit-length complex number $\alpha=e^{2\pi i\theta}$ as an \emph{angle}, rather than using $\theta$. This comports more smoothly with our algebraic arguments than the conventional terminology.
  Accordingly,  angle sums will be products of  unit complex numbers and angle multiples will be powers. Further, we usually  refer to the real and imaginary parts of $\alpha$ instead of to $\cos(\alpha)$ and  $\sin(\alpha)$ (or     $\cos(\theta)$ and $\sin(\theta)$).

 % \V

  Given a set $S$ of complex numbers that includes the numbers $0$ and $1$, we say that a complex number $\beta$ is \emph{constructible over S} if there exists an unmarked straightedge and compass construction starting with the numbers in $S$ and ending with $\beta$. \emph{When $S=\{0,1\}$, we say simply that $\beta$ is constructible}.

 % \V

  Let $RI(S)$ denote the set of real and imaginary parts of the numbers in $S$, regarded either as points on the X-axis, say, or as real numbers.  Clearly a complex number $\beta$ is constructible over $S$ if and only if it is constructible over $RI(S)$, and this is true if and only if both the real and imaginary parts of $\beta$ are constructible over $S$  (equivalently, over $RI(S)$, or equivalently, over the field $\Q(RI(S))$). When $\beta$ is an angle, the constructibility of either $Re(\beta)$ or $Im(\beta)$ implies the constructibility of the other, hence of $\beta$. Thus, for example, the angle $\beta$ is constructible over $\{0,1,\alpha\}$ if and only if $Re(\beta))$  is constructible over the field $\Q(Re(\alpha) )$.

  %\V

  The \emph{Fundamental Theorem of Constructible Numbers} asserts that a real number $r$ is constructible over a subfield $F \subseteq \R$\ if and only if there is a finite tower of field extensions $F=F_0\subset F_1\subset\ldots\subset F_k$ such that: (i) $r\in F_k\subset \R$; (ii) each extension $F_i\subset F_{i+1}$ is quadratic.  In particular, this implies that numbers constructible over $F$ have minimal polynomials over $F$ with degrees that are  powers of two.

 % \V

Given a positive integer $m$, we say that \emph{an angle $\alpha$ is \emph{$m$-sectable}} if there exists an angle $\beta$ satisfying $\beta^m=\alpha$  such that $\beta$ is constructible over  $\{0,1,\alpha\}$, or, equivalently, $Re(\beta)$ is constructible over $\Q(Re(\alpha))$.  Note that our conventions allow $\beta$ to be in quadrants other than the first even when $\alpha$ is an acute angle.

%\V

\subsection{Chebyshev polynomials}

A good reference for the standard definitions, properties, and examples of Chebyshev polynomials is \cite{w}.  Here we present some basic definitions and facts about these polynomials, tailored to our needs in this paper. We prove some results about them that may not be so well known.

%\V

Let $u$ and $v$ be indeterminates, and consider the ring $\C[u,v]$.  Then, there exist unique  polynomials $A_m(u,v)$ and $B_m(u,v)$ in $\Z[u,v]$ such that
\begin{equation}\label{E:formal power}
 (u+iv)^m=A_m(u,v^2)+ivB_m(u,v^2)
 \end{equation}
 in $\C[u,v]$.
Let $x$ and $y$ be any complex numbers, and substitute $x$ for $u$ and $y$ for $v$ in (\ref{E:formal power}):
\begin{equation}
(x+iy)^m= A_m(x,y^2)+iyB_m(x,y^2).
\end{equation}

The \emph{$m^{th}$ Chebyshev polynomials (of the first and second kind)} are now defined as follows.  For any real number $x$, choose a complex number $y$ so that $y^2=1-x^2$.  Then we set

\begin{equation}\label{E:Chebyshev definition}
T_m(x) = A_m(x,1-x^2)\quad\mbox{and}\quad U_m(x)= B_m(x, 1-x^2).\footnote{Our notation for $U_m$ is non-standard; it is usually denoted $U_{m-1}$.}
\end{equation}

Explicit formulas for $T_m(x)$  and $U_m(x)$ can be derived from the above:

\begin{eqnarray}
T_m(x) & = & \sum_{0\leq 2k\leq m}\sum_{l=0}^k(-1)^{k+l}{ m \choose 2k}{k \choose \ell}x^{m-2k+2\ell}.~\label{E:chebyshev one}\\
U_m(x) & = & \sum_{1\leq 2k+1\leq m}{m\choose 2k+1}x^{m-2k-1}(x^2-1)^k.~\label{E:chebyshev two}
\end{eqnarray}

Here are some examples of $T_m(x)$ for small values of $m$: $T_0(x)=1,\;\mbox{$T_1(x)=x$},\linebreak T_2(x)=2x^2-1,\;T_3(x)=4x^3-3x,\; T_4(x)=8x^4-8x^2+1,\; \mbox{$T_5(x)=16x^5-20x^3+5x$},\linebreak T_6(x)=32x^6-48x^4+18x^2-1, T_7(x)=64x^7-112x^5+56x^3-7x$.

%\V

\begin{lemma}\label{L:trig formula} \quad The following statements are equivalent for all angles $\alpha$ and $\beta$ and all positive integers $m$:  (a) $\beta^m =\alpha^{\pm1}$, and (b) $T_m(Re(\beta))=Re(\alpha)$.
\end{lemma}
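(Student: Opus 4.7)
The plan is to derive both directions directly from the defining identity of $T_m$, together with the elementary fact that a unit complex number is determined by its real part up to complex conjugation.

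First I would observe that, since $\beta$ is an angle, $|\beta| = 1$, so if we write $u = Re(\beta)$ and $v = Im(\beta)$ we have $v^2 = 1 - u^2$. Substituting $x = u$ and $y = v$ in the formal identity (\ref{E:formal power}) yields
\[
\beta^m = (u + iv)^m = A_m(u, v^2) + iv\, B_m(u, v^2) = A_m(u, 1 - u^2) + iv\, B_m(u, 1 - u^2),
\]
which by the definition (\ref{E:Chebyshev definition}) of $T_m$ and $U_m$ is exactly $T_m(Re(\beta)) + i\, Im(\beta)\, U_m(Re(\beta))$. Taking real parts gives the key identity
\[
Re(\beta^m) = T_m(Re(\beta)).
\]

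For the implication (a) $\Rightarrow$ (b), note that if $\beta^m = \alpha$ then taking real parts and applying the identity above gives (b) immediately. If instead $\beta^m = \alpha^{-1}$, then since $\alpha$ lies on the unit circle we have $\alpha^{-1} = \bar\alpha$, so $Re(\beta^m) = Re(\bar\alpha) = Re(\alpha)$, again giving (b).

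For the converse (b) $\Rightarrow$ (a), I would use that $\beta^m$ is a unit complex number (since $|\beta^m| = |\beta|^m = 1$) and so is $\alpha$. Two unit complex numbers with the same real part must be either equal or complex conjugates of each other; hence from $Re(\beta^m) = Re(\alpha)$ we conclude $\beta^m = \alpha$ or $\beta^m = \bar\alpha = \alpha^{-1}$, which is (a). There is no serious obstacle here: the argument is essentially a reading off of the definition of $T_m$, with the only small point to check being that the arbitrary choice of $y$ in (\ref{E:Chebyshev definition}) is harmless because $T_m$ and $U_m$ depend only on $x$.
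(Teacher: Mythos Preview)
Your proof is correct and is precisely the ``easy derivation from the definitions'' that the paper leaves to the reader. You establish the key identity $Re(\beta^m)=T_m(Re(\beta))$ directly from equations (\ref{E:formal power}) and (\ref{E:Chebyshev definition}), and then use the elementary fact that a unit complex number is determined by its real part up to conjugation; this is exactly the intended argument.
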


The implication (a)$\Rightarrow$ (b) is, essentially,  the trigonometric identity that we mention in the introduction in the case $m=3$.  The proof of the equivalence is an easy derivation from the definitions, which we leave to the reader.

%\V

Next, here is a lemma listing several other useful properties of the polynomials $T_m(x)$.

\begin{lemma}\label{L:properties of T} \begin{enumerate}
\item The leading term of $T_m(x)$ is $2^{m-1}x^m$.
\item If $m$ is odd, $T_m(x)$ is an odd function of $x$, so that $T_m(0)=0$. Moreover, $T_m(\pm 1)=\pm 1$.  When $m$ is even, $T_m(x)$ is an even function of $x$.  Also then $T_m(0)=(-1)^{m/2}$ and $T_m(\pm 1)=1$.
%\item When $m$ is odd, $U_m(x)$ is an even function of $x$.  When $m$ is even, %$U_m(x)$ is an odd function of $x$.
\item For any positive integers $r$ and $s$,  $T_{rs}(x)=T_r(T_s(x))$.
%\item Let $\|T_m\|$ denote the sum of the absolute values of the coefficients of $T_m$. Then, when $m$ is odd, $\|T_m\|= 2\sinh\left(m\ell n(\sqrt{2}+1)\right)$.
\end{enumerate}
\end{lemma}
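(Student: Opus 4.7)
The plan is to handle the three parts in turn, using the explicit formula (\ref{E:chebyshev one}) for parts (a) and (b), and Lemma \ref{L:trig formula} for part (c). For part (a), I would read off the coefficient of $x^m$ directly from (\ref{E:chebyshev one}). The exponent $m-2k+2\ell$ equals $m$ precisely when $\ell=k$, which collapses the inner sum to the single term $\binom{m}{2k}$ (the sign $(-1)^{2k}$ being $+1$). The coefficient of $x^m$ is therefore $\sum_{0\le 2k\le m}\binom{m}{2k}=2^{m-1}$, the standard sum of even-indexed binomial coefficients (obtained from $(1+1)^m\pm(1-1)^m$). Since $m-2k+2\ell\le m$ throughout the index range (because $\ell\le k$), no higher power of $x$ appears, so $2^{m-1}x^m$ is the leading term.

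For part (b), the parity claim is immediate from (\ref{E:chebyshev one}): every exponent $m-2k+2\ell$ shares the parity of $m$, so $T_m$ is an odd (resp. even) function when $m$ is odd (resp. even). For the point values I would bypass the explicit formula and invoke Lemma \ref{L:trig formula}, which gives $T_m(Re(\beta))=Re(\beta^m)$ for any unit-modulus complex number $\beta$. Setting $\beta=1$, $\beta=-1$, and $\beta=i$ in turn yields each of the stated values; in particular $T_m(0)=Re(i^m)=\cos(m\pi/2)$, which traces the cycle $0,-1,0,1$ as $m$ ranges modulo $4$, giving $0$ when $m$ is odd and $(-1)^{m/2}$ when $m$ is even. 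The case $\beta=-1$ with $m$ even can, as a consistency check, also be recovered from the already-established parity.

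For part (c), the identity $T_{rs}(x)=T_r(T_s(x))$ is between polynomials in $\R[x]$, so it suffices to establish it on the infinite set $[-1,1]$. Given $x\in[-1,1]$, choose an angle $\beta$ with $Re(\beta)=x$; applying Lemma \ref{L:trig formula} to $\beta$ gives $T_s(x)=Re(\beta^s)$, and since $\beta^s$ is itself a unit complex number, a second application to $\beta^s$ yields $T_r(T_s(x))=Re((\beta^s)^r)=Re(\beta^{rs})=T_{rs}(x)$. The only subtlety worth stating cleanly is this iterated use of Lemma \ref{L:trig formula}, which is legitimate precisely because $|\beta^s|=1$; apart from that, none of the three parts presents a genuine obstacle.
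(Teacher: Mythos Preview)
Your proof is correct. The paper itself does not supply an argument for this lemma: it simply remarks that ``these statements are well known; in any case, they can be verified easily from the definitions.'' Your write-up does exactly that---reading off (a) and the parity in (b) from the explicit formula (\ref{E:chebyshev one}), and obtaining the point values in (b) and the composition identity in (c) from Lemma~\ref{L:trig formula}---so your approach is fully in line with what the paper hints at, only spelled out in detail.
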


These statements  are well known; in any case, they can be verified easily from the definitions.
\begin{comment}
 Expand each of the two terms in $(\sqrt{2}+1)^m - (\sqrt{2}-1)^m$ using the binomial formula, and then subtract these  to get $2\sum_{0\leq 2k\leq m}{m \choose 2  k}2^ k = 2\sum_{0\leq 2k\leq m}\sum_{\ell=0}^ k{m \choose 2  k}{k \choose \ell}= 2\|T_m\|$. Observe that $\sqrt{2}-1=(\sqrt{2}+1)^{-1}$, and then apply the definition of $\sinh$. \end{proof}
\end{comment}
 %\V

\begin{lemma}\label{L:chebyshev preserves unit interval} Let $m$ be any positive integer.  For any real number $x$,  $|x|\leq 1$ if and only if $|T_m(x)|\leq 1$.
\end{lemma}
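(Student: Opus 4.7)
The plan is to handle the two implications separately: the forward direction ($|x|\le 1 \Rightarrow |T_m(x)|\le 1$) follows immediately from Lemma \ref{L:trig formula}, while the reverse direction (equivalently, $|x|>1 \Rightarrow |T_m(x)|>1$) is established by a hyperbolic substitution, reducing to the identity $T_m(\cosh t)=\cosh(mt)$ and then invoking Lemma \ref{L:properties of T}(b) to deal with signs.

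For the forward direction, given $x\in[-1,1]$, I would choose an angle $\beta$ on the unit circle with $\mathrm{Re}(\beta)=x$, set $\alpha=\beta^m$, and observe that $|\alpha|=1$. Lemma \ref{L:trig formula} then yields $T_m(x)=T_m(\mathrm{Re}(\beta))=\mathrm{Re}(\alpha)$, so $|T_m(x)|\le|\alpha|=1$.

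For the reverse direction, suppose $x>1$ and write $x=\cosh(t)$ for a unique $t>0$. I would derive $T_m(\cosh t)=\cosh(mt)$ directly from the defining identity (\ref{E:formal power}): substituting $u=\cosh t$ and $v=\pm i\sinh t$ gives $v^2=-\sinh^2 t=1-x^2$, so
\[
(\cosh t \pm \sinh t)^m = A_m(\cosh t,\,1-\cosh^2 t)\;\pm\;\sinh t\cdot B_m(\cosh t,\,1-\cosh^2 t),
\]
i.e., $e^{\pm mt}=T_m(\cosh t)\pm \sinh t\cdot U_m(\cosh t)$. Averaging the two equations yields $T_m(\cosh t)=\cosh(mt)$, which is strictly greater than $1$ since $t>0$. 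For the remaining case $x<-1$, I would apply Lemma \ref{L:properties of T}(b): $T_m(-x)$ equals $\pm T_m(x)$ according to the parity of $m$, so $|T_m(x)|=T_m(-x)=\cosh(mt)>1$ where now $-x=\cosh(t)$.

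The only nontrivial step is the derivation of the hyperbolic identity $T_m(\cosh t)=\cosh(mt)$ within the paper's algebraic framework for Chebyshev polynomials (the more familiar trigonometric identity $T_m(\cos\theta)=\cos(m\theta)$ is essentially Lemma \ref{L:trig formula}, but it only covers $|x|\le 1$). The main obstacle is thus ensuring that the substitution $v=\pm i\sinh t$ is legitimate in (\ref{E:formal power}) — but since that identity holds formally in $\mathbb{C}[u,v]$, evaluation at any complex values of $u,v$ is valid, so this obstacle is only cosmetic.
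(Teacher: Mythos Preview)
Your argument is correct, but it follows a different path from the paper's. The paper establishes both implications simultaneously via the single algebraic identity
\[
T_m(x)^2+(1-x^2)\,U_m(x)^2=1,
\]
obtained by multiplying (\ref{E:formal power}) by its conjugate $(u-iv)^m=A_m(u,v^2)-ivB_m(u,v^2)$ and reducing modulo $u^2+v^2-1$; the forward direction is then immediate, and the reverse direction requires the extra observation (from inspecting (\ref{E:chebyshev two})) that $U_m(x)\neq 0$ for $|x|>1$ to handle the boundary case $|T_m(x)|=1$. Your approach instead separates the two directions analytically: the forward one is read off from Lemma~\ref{L:trig formula}, and the reverse one comes from the hyperbolic analogue $T_m(\cosh t)=\cosh(mt)$, which you derive by specializing (\ref{E:formal power}) at $u=\cosh t$, $v=\pm i\sinh t$, together with the parity statement in Lemma~\ref{L:properties of T}(b) for $x<-1$. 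The paper's route is more uniform and yields the Pell-type identity as a byproduct (though it pays for this with the somewhat ad hoc check that $U_m$ has no zeros outside $[-1,1]$), while your route is more direct for the statement at hand and avoids that check entirely, at the cost of a small case split on the sign of $x$.
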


%\V

\noindent\textbf{Remark}:\quad This lemma gives a technical fact about $T_m(x)$ that will be helpful when the sets we are estimating are contained in $[-1,1]$.

\begin{proof}\quad  The ``conjugate'' of identity (\ref{E:formal power}) is
\[(u-iv)^m = A_m(u,v^2)-ivB_m(u,v^2).\]
 Multiplying it by (\ref{E:formal power}) yields the identity
\[(u^2+v^2)^m=A_m(u,v^2)^2+v^2B_m(u,v^2)^2.\]
 Now let $I$ be the ideal in $\C[u,v]$ generated by $u^2+v^2-1$.  The quotient $\C[u,v]/I$ is a ring $R$ generated by the images $s$ of $u$ and $t$ of $v$, which satisfy $s^2+t^2=1$. The displayed identity above becomes
  \[1=A_m(s,1-s^2)^2+(1-s^2)B_m(s,1-s^2)^2\]
  in $R$. Choose any real number $x$ and then any complex number $y$ such that $x^2 +y^2=1$.  There is then a unique homomorphism of $R$ to \C\  sending $s$ to $x$ and $t$ to $y$, under which the last
identity above gets mapped to,
\[ 1 = T_m(x)^2+(1-x^2)U_m(x)^2.\]
 The polynomials $T_m$ and $U_m$ are real polynomials in the real variable $x$. It is immediate that  if $|x|\leq 1$, then $|T_m(x)|\leq 1$  and if $|T_m(x)|<1$ then $|x|< 1$. Finally, suppose that $|T_m(x)|=1$. Then, by the above equation, either $|x|=1$ or $U_m(x)= 0$. But, by inspecting equation (\ref{E:chebyshev two}), we see that $U_m(x)$ cannot be zero for $|x| > 1$. Thus, in any case, $|x|\leq 1$, as required.

\end{proof}

%\V

\begin{lemma}\label{L:properties of P}
\begin{enumerate}

\item If $m$ is an odd prime, then except for the leading coefficient and possibly the constant term $-a$, every coefficient in $T_m(x)-a$ is divisible by $m$.
\item If $m$ is  prime, then there exist infinitely many values of $a\in\Q$ such that $T_m(x)-a$ is irreducible over $\Q$.
\item If $m$ is  prime and $a$ is a transcendental number, then $T_m(x)-a$ is irreducible over $\Q(a)$.

\end{enumerate}
    \end{lemma}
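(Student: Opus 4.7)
For part (a), I would read off the statement from the explicit formula (\ref{E:chebyshev one}). The coefficient of $x^j$ in $T_m(x)$ for $0 < j < m$ receives contributions only from terms in the double sum with $k \geq 1$, and every such term carries a factor $\binom{m}{2k}$ with $1 \leq 2k \leq m-1$. When $m$ is an odd prime, $m$ divides each such binomial coefficient (the $m$ in the numerator is not cancelled by any factor of the denominator, since $2k < m$ and $m-2k < m$). The constant term of $T_m(x)$ equals $T_m(0) = 0$ by Lemma \ref{L:properties of T}(b), so the constant term of $T_m(x) - a$ is $-a$. Finally, the leading coefficient is $2^{m-1}$, which by Fermat's little theorem is congruent to $1$ modulo $m$ and hence not divisible by $m$.

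For part (b), I would treat the cases $m = 2$ and $m$ odd prime separately. When $m$ is an odd prime, part (a) lets me apply the Eisenstein criterion at the prime $m$ to $T_m(x) - m\ell$ for any nonzero integer $\ell$ coprime to $m$: the leading coefficient $2^{m-1}$ is a unit modulo $m$, the middle coefficients are divisible by $m$, and the constant term $-m\ell$ is divisible by $m$ but not by $m^2$. This yields irreducibility over $\Q$, with infinitely many admissible $\ell$. When $m = 2$, the quadratic $T_2(x) - a = 2x^2 - (1+a)$ is irreducible in $\Q[x]$ precisely when $2(1+a)$ is not a rational square; choosing $a = 2n$ for any nonnegative integer $n$ gives $2(1+2n) \equiv 2 \pmod{4}$, which is never a perfect square, so again infinitely many $a$ work.

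For part (c), I would view $T_m(X) - A$ as an element of the polynomial UFD $\Q[X,A]$. Rewritten as a polynomial in $A$ with coefficients in $\Q[X]$, it equals $-A + T_m(X)$, which is linear in $A$ with content $\gcd(-1, T_m(X)) = 1$; therefore it is irreducible in $\Q[X,A]$. Since it has positive degree in $X$, it does not lie in $\Q[A] \setminus \{0\}$, so it remains irreducible after inverting the nonzero elements of $\Q[A]$; that is, it is irreducible in $\Q(A)[X]$. Finally, since $a$ is transcendental over $\Q$, the assignment $A \mapsto a$ induces an isomorphism $\Q(A)[X] \to \Q(a)[X]$ sending $T_m(X) - A$ to $T_m(X) - a$, so irreducibility transfers.

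The main obstacle is the two-step argument in part (c): one has to combine the Gauss-type observation in $\Q[X, A]$ with the localization step to reach $\Q(A)[X]$, and then invoke the transcendental specialization $A \mapsto a$ to transfer irreducibility to $\Q(a)[X]$. Parts (a) and (b) are comparatively direct, reducing to the explicit coefficient expansion, Fermat's little theorem, the Eisenstein criterion, and a simple discriminant calculation.
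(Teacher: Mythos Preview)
Your argument is correct. Parts (a) and (b) follow essentially the paper's approach: inspect the explicit formula~(\ref{E:chebyshev one}) for the divisibility claim, then split into the quadratic case and the Eisenstein case for (b), with only cosmetic differences in the specific $a$'s chosen.

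Part (c) is where you diverge. The paper deduces (c) from (b): it assumes a nontrivial factorization of $T_m(x)-t$ over $\Q(t)$, clears denominators to work in $\Q[t][x]$, and then specializes $t$ to one of the rational values supplied by (b), avoiding the finitely many zeros of the nonzero coefficient polynomials, to contradict the irreducibility of $T_m(x)-a$ over~$\Q$. Your route is more direct: you note that $T_m(X)-A$ is linear in $A$ with unit leading coefficient, hence irreducible in $\Q[X][A]=\Q[A][X]$, and then invoke Gauss's lemma over the UFD $\Q[A]$ to pass to $\Q(A)[X]$. This bypasses (b) entirely and, as a bonus, does not use the primality of $m$ at all---your argument shows $T_m(x)-a$ is irreducible over $\Q(a)$ for \emph{every} $m\geq 1$ when $a$ is transcendental. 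The paper's route has the minor expository advantage of making (c) feel like a corollary of (b), but yours is shorter and yields a stronger statement.
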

\begin{proof}   Statement (a) may not be  widely known, but it is immediate from inspection of the coefficients in the formula for  $T_m(x)$.

We prove statement (b) in two parts.  First, when $m=2$, then $T_m(x)-a=2x^2-1-a$, which is clearly irreducible for all $a\in \Q$ such that $(1+a)/2$ is not a square in \Q. Secondly, suppose that $m$ is an odd prime.  Choose any rational value $a=r/s$ such that $r$ and $s$ are coprime and $r$ is divisible by $m$ but not by $m^2$.  Then, using statement (a), we may apply Eisenstein's Criterion together with the Gauss Lemma to conclude  that $T_m(x)-a$ is irreducible over $\Q$.

We now show that statement (c) follows from statement (b) via a somewhat standard argument. Let $t$ be an indeterminate, and consider  $T_m(x)-t$ as a polynomial in $\Q[t][x]$. Suppose that it factors in $\Q[t][x]$, say $T_m(x)-t=FG$, where both $F$ and $G$ are polynomials of positive degree in $x$, with coefficients $c_i$ and $d_j$, respectively,  that are polynomials in $\Q[t]$.   Statement (b) implies that we may choose a rational number $a$ such that $T_m(x)-a$ is irreducible over $\Q$ and such that $a$ is not a zero of any non-zero $c_i$ or $d_j$.  Now define a $\Q$-algebra homomorphism $\Q[t]\rightarrow \Q$ by sending $t$ to $ a$.  This induces a homomorphism
$\Q[t][x]\rightarrow \Q[x]$ which sends $T_m(x)-t$ to $T_m(x)-a$.  It also sends $F$ and $G$ to positive-degree polynomials in $\Q[x]$ whose product is $T_m(x)-a$, a contradiction. Therefore, $T_m(x)-t$ is irreducible over $\Q[t]$, hence over $\Q(t)$. Now suppose that $a$ is a transcendental number.  The rule $t\mapsto a$ defines a $\Q$-algebra isomorphism $\Q[t]\rightarrow \Q[a]$, hence an isomorphism $\Q(t)[x]\rightarrow \Q(a)[x]$. Obviously $T_m(x)-t\mapsto T_m(x)-a$, so the latter is irreducible over $\Q(a)$.
\end{proof}

%\V

The following lemma   proves Theorem \ref{T:algebraicity}.

%\V

\begin{lemma}\label{L:m-sectable angles}Let $\alpha$ be any angle, and set $a=Re(\alpha)$. \begin{enumerate}
 \item Assume that $m$ is not a power of two. If  $\alpha$ is $m$-sectable, then $T_m(x)- a$ is reducible over the field $\Q(a)$.
 \item Let $m$ be as in (a), and suppose that $\alpha$ is $m$-sectable. Then
 $a$ is an algebraic number.
 \item   If every angle is $m$-sectable, then $m$ is a power of two, and conversely.
 \end{enumerate}
     \end{lemma}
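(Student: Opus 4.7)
My plan is to prove the three parts in order, each one building on the previous.

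For part (a), I would invoke Proposition \ref{P:constructibility criterion}: $m$-sectability of $\alpha$ yields a unit-length $\beta$ with $\beta^m=\alpha$ and $\beta$ constructible over $\{0,1,\alpha\}$. Then $\beta_r := \mathrm{Re}(\beta)$ is constructible over $\Q(\mathrm{Re}(\alpha)) = \Q(a)$, and by Lemma \ref{L:trig formula} it is a real zero of $T_m(x)-a$. The Fundamental Theorem of Constructible Numbers (recalled in \S 2.1) forces the minimal polynomial of $\beta_r$ over $\Q(a)$ to have degree a power of two. This minimal polynomial divides $T_m(x)-a$, which has degree $m$; since $m$ is not a power of two, the divisor has degree strictly less than $m$, so $T_m(x)-a$ factors properly over $\Q(a)$.

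For part (b), the strategy is to reduce to an odd-prime sector so that Lemma \ref{L:properties of P}(c) applies. Since $m$ is not a power of two, write $m = pk$ with $p$ an odd prime. If $\beta$ witnesses $m$-sectability of $\alpha$, then $\gamma := \beta^k$ lies on the unit circle, is constructible (constructible complex numbers form a field, hence are closed under integer powers), and satisfies $\gamma^p = \beta^m = \alpha$; therefore $\alpha$ is also $p$-sectable. Applying part (a) with $m$ replaced by $p$ shows $T_p(x) - a$ is reducible over $\Q(a)$. But Lemma \ref{L:properties of P}(c) asserts that if $a$ were transcendental, $T_p(x) - a$ would be irreducible over $\Q(a)$. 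Hence $a$ must be algebraic.

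For part (c), iterated bisection immediately gives the implication ``$m$ a power of two $\Rightarrow$ every angle is $m$-sectable.'' For the converse, suppose $m$ is not a power of two, and pick any angle $\alpha$ with $\mathrm{Re}(\alpha)$ transcendental (such angles abound, since $[-1,1]$ contains transcendentals). By part (b), $\alpha$ is not $m$-sectable, so $m$-sectability sometimes fails.

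The main obstacle, in my view, is the bookkeeping in part (b): one must confirm that the constructibility of $\beta$ over $\{0,1,\alpha\}$ transfers to $\gamma = \beta^k$, which rests only on the field structure of constructible numbers. Once that passage is secured, the proof becomes a clean two-step bridge — from the geometric hypothesis of $m$-sectability, to reducibility of $T_p(x)-a$ via part (a) at an odd prime divisor, to algebraicity forced by Lemma \ref{L:properties of P}(c).
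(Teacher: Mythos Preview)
Your proposal is correct and follows essentially the same approach as the paper's own proof: for (a) you use the minimal polynomial of $Re(\beta)$ as a proper power-of-two-degree divisor of $T_m(x)-a$; for (b) you pass to an odd prime divisor $p$ of $m$, observe that $\beta^k$ witnesses $p$-sectability, apply (a), and invoke Lemma~\ref{L:properties of P}(c); and (c) is handled by the same transcendental-cosine argument. The only cosmetic difference is that the paper appeals directly to the definition of $m$-sectability rather than to Proposition~\ref{P:constructibility criterion}, but the substance is identical.
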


\begin{proof} (a)  By hypothesis, there is an angle $\beta$ that is constructible over the set $\{0,1,\alpha\}$ such that
$\beta^m =\alpha$. As we comment above, $b =Re(\beta)$ is constructible over the field $\Q(a)$.  Let $g(x)$ be the minimal polynomial of $b$ over $\Q(a)$.  From The Fundamental Theorem of Constructible Numbers, we know that the degree of $g(x)$ is a power of two.  Since $b$ is  a zero of $T_m(x)-a$, $g(x)$ must divide $T_m(x)-a$.  Moreover, it is a proper divisor because $m$ is not a power of two. This proves statement (a).

(b)  If $m$ is not a power of two, then it has an odd prime divisor, say $p$; write $m=kp$.  If $\alpha=\beta^m$, for some angle $\beta$ constructible over $\{0,1,\alpha\}$, then $\alpha=(\beta^k)^p$, with $\beta^k$ constructible over
$\{0,1,\alpha\}$, i.e., $\alpha$ is $p$-sectable. Further $p$ is not a power of two, so the hypotheses of statement (a) are satisfied for $p$ and $\alpha$. Hence $T_p(x)-a$ is reducible over $\Q(a)$.  Therefore, by statement (c) in Lemma \ref{L:properties of P}, $a$ must be algebraic.

(c) If every angle is $m$-sectable, there are $m$-sectable angles $\alpha$ for which $a$ is transcendental.  For this not to contradict statement (b), it must be the case that $m$ is a power of two.  Conversely, when $m$ is a power of two, every angle is $m$-sectable because bisection always holds.

\end{proof}

\subsection{Proof of Theorem \ref{T:constructible zeros} (a): }

We start with the case $m=2$.  The Chebyshev polynomial $T_2(x)$ equals $2x^2-1$.  Thus, for example, the equation $T_2(x)-1/4=0$ has solutions $\pm \sqrt{5/8}$, both of which are constructible over $\Q(1/4)=\Q$\ but do not belong to \Q.
Next, we apply the identity $T_m\circ T_n= T_{mn}$ to $T_{2^k}$, for $k>1$:
$T_{2^k}(x)- 1/4= 0$ if and only if $T_{2^{k-1}}(x) = \pm\sqrt{5/8}$, which has no rational solution.  However, it is easily checked inductively that solutions exist and are constructible.

In the rest of this subsection, therefore, \emph{we  assume that $m$ is an even number that is not a power of two}.

The remainder of the proof of Theorem \ref{T:constructible zeros} (a) makes use of the standard valuation $\nu_q$ on \Q, defined for every prime $q\in \N$.  Specifically,
$\nu_q(0)=0$ and $\nu_q(q^eh/k)=q^{-e}$, where $h,k,e$ are integers, and $h$ and $k$ are not divisible by $q$.

% \V

\begin{lemma}\label{L:calculating the valuation}\quad  Suppose that $r\in \Q$ and that $q$ is an odd prime.  (a) If $\nu_q(r)\leq 1$, then $\nu_q(T_m(r))\leq 1$.\quad (b) If $\nu_q(r) > 1$, then $\nu_q(T_m(r))=\nu_q(r)^m$.
\end{lemma}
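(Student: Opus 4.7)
The key observation for this lemma is that the $q$-adic absolute value $\nu_q$ is non-archimedean and that, because $q$ is an odd prime, the leading coefficient $2^{m-1}$ of $T_m$ is a $q$-adic unit (i.e., $\nu_q(2^{m-1})=1$). Combined with the fact from Lemma \ref{L:properties of T} that $T_m(x)\in \Z[x]$, these two facts essentially dictate the proof.

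For part (a), the plan is to remark that, since $T_m\in\Z[x]$, the polynomial $T_m$ restricts to a map from the localization $\Z_{(q)}=\{s\in\Q:\nu_q(s)\leq 1\}$ to itself; the hypothesis $\nu_q(r)\leq 1$ is exactly the statement $r\in \Z_{(q)}$, so the conclusion follows by applying $T_m$ and noting that $\Z_{(q)}$ is a subring of $\Q$.

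For part (b), I would write $T_m(x)=\sum_{i=0}^m a_i x^i$ with $a_i\in\Z$ and $a_m=2^{m-1}$. Set $N=\nu_q(r)$, so $N>1$, and compute each term separately. The leading term satisfies
\[
\nu_q(a_m r^m)=\nu_q(2^{m-1})\cdot \nu_q(r)^m = 1\cdot N^m = N^m,
\]
where the middle equality uses that $q$ is odd. For $i<m$, since $a_i\in\Z$ gives $\nu_q(a_i)\leq 1$, and $N>1$ gives $N^i\leq N^{m-1}<N^m$, we obtain
\[
\nu_q(a_i r^i)=\nu_q(a_i)\cdot N^i \leq N^{m-1}< N^m.
\]

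The conclusion then follows from the strong triangle inequality for $\nu_q$: if one term in a finite sum strictly dominates all others in $\nu_q$, then the sum has the same $\nu_q$ as that term. Applying this to $T_m(r)=a_m r^m + \sum_{i<m}a_i r^i$ yields $\nu_q(T_m(r))=N^m=\nu_q(r)^m$, as required. There is no real obstacle here; the only thing worth flagging is that the hypothesis that $q$ be odd is used in exactly one spot (to control $\nu_q(2^{m-1})$), and without it the identity in (b) would fail whenever $q=2$.
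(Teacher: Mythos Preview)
Your proof is correct and follows essentially the same approach as the paper's: both arguments hinge on the leading term $2^{m-1}r^m$ having $q$-adic absolute value exactly $\nu_q(r)^m$ (since $q$ is odd) and strictly dominating all lower-order terms. The paper carries this out by writing $r=c/d$ in lowest terms and computing $T_m(r)$ as an explicit fraction over $d^m$, whereas you invoke the ultrametric inequality directly; these are two presentations of the same computation.
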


\begin{proof}\quad  By Lemma \ref{L:properties of T} (a), we may write
\[ T_m(x) = 2^{m-1}x^m+c_1x^{m-1}+\ldots + c_{m-1}x+c_0,\]
for some $c_i\in \Z$.  Write $r=c/d$,  where $c$ and $d$ are relatively prime integers.  Then we have
\[ T_m(r)= \left(2^{m-1}c^m+dc_1c^{m-1}+\ldots +d^{m-1}c_{m-1}c+d^mc_0\right)/d^m,\]
and so
\[ \nu_q(T_m(r))= \nu_q(2^{m-1}c^m+dc_1c^{m-1}+\ldots +d^mc_0)\nu_q(d^{-m})\leq \nu_q(d)^{-m}.\]
When $\nu_q(r)\leq 1$, $q$ cannot divide $d$, and so  $\nu_q(d)=1$, proving (a).

Now suppose $\nu_q(r) > 1$, which implies that $q$ divides $d$ but does not divide $c$.  Then,
$q$ does not divide $2^{m-1}c^m$, and hence, it cannot divide $numerator(T_m(r))$.  So $numerator(T_m(r))\neq 0$, and
\[ \nu_q(T_m(r)) = 1\cdot\nu_q(d)^{-m} = \nu_q(r)^m,\]
which completes the proof of statement (b) of the lemma.
\end{proof}

%\V

 We now complete the proof of Theorem \ref{T:constructible zeros} (a).  The even number $m$ can be written as $2^kn$, for some integer $k\geq 1$ and some odd  integer $n$, which is $> 1$ because $m$ is not a power of two.  Then

 \[T_m(\sqrt{2/3})=T_{2^{k-1}n}(T_2(\sqrt{2/3}))=T_{2^{k-1}n}(1/3)\in \Q.\]

 Set this number equal to $a$.  So, $a\in \Q\cap [-1,1]$ (Lemma \ref{L:chebyshev preserves unit interval}), and $T_m(\sqrt{2/3})=a$, where $\sqrt{2/3}$ is constructible
 over $\Q(a)=\Q$.  It remains to show that no rational $r$ satisfies $T_m(r)=a$.

 First, since $\nu_3(1/3)=3 > 1$, we may apply Lemma \ref{L:calculating the valuation} (b) to $T_{2^{k-1}n}(1/3)=a$ : we get $\nu_3(a)=3^{2^{k-1}n}$.

Now suppose there is an  $r\in\Q$ such that $T_m(r) = a$. If $\nu_3(r)\leq 1$,  Lemma \ref{L:calculating the valuation} (a) implies that $\nu_3(T_m(r))\leq 1$, which contradicts
 $\nu_3(a)=3^{2^{k-1}n}$.  Therefore, we must have  $\nu_3(r)>1$. We then apply Lemma \ref{L:calculating the valuation} (b) again and get
 \[ \nu_3(r)^{2^kn}=\nu_3(T_m(r))=\nu_3(a)=3^{2^{k-1}n}.\]
 Taking $(2^{k-1}n)^{th}$ roots of both sides of this equality, we get $\nu_3(r)^2=3$, which is impossible.  Therefore, no $r\in \Q$ can satisfy $T_m(r) = a$, completing the proof of Theorem \ref{T:constructible zeros} (a). \quad\quad $\square$

% \V

 \subsection{Toward a proof of Theorem \ref{T:constructible zeros} (b)}\quad The key idea in the proof is to transform the assertion, which involves real solutions to certain real polynomial equations, to an equivalent one about complex solutions to certain complex polynomial equations. That  assertion becomes relatively easy to verify. We  develop the technical tools that allow us to make and use this transformation
 via a number of lemmas, the key one being Lemma \ref{L:tower lifting}.

% \V

 \begin{lemma}\label{L:degree-two extension} Let $\alpha$ be any angle, and let $a=Re(\alpha)$. For any subfield $F\subseteq\C$, $F(\alpha)$ is an extension of $F(a)$, and $[F(\alpha):F(a)]\leq 2$.  When $F\subseteq\R, \quad  [F(\alpha):F(a)]=2 \Leftrightarrow \alpha\neq \pm 1$.   Finally,  when $F\subseteq \R$, $F(a)=F(\alpha) \cap \R$.
 \end{lemma}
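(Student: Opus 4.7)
The plan is to exploit the fact that a unit complex number $\alpha$ satisfies a natural quadratic relation over $F(a)$, namely $\alpha\bar\alpha=1$ together with $\alpha+\bar\alpha=2a$. Combining these gives $\alpha^2-2a\alpha+1=0$, so $\alpha$ is a root of $x^2-2ax+1\in F(a)[x]$. Since $F(\alpha)\supseteq F(a)(\alpha)=F(a)[\alpha]$ and conversely $\alpha\in F(\alpha)$, we have $F(\alpha)=F(a)[\alpha]$, and the quadratic relation yields $[F(\alpha):F(a)]\leq 2$ at once.

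Next I would nail down when equality holds under the assumption $F\subseteq\R$. The field $F(a)$ is then a subfield of $\R$, while a unit complex number $\alpha$ is real if and only if $\alpha=\pm 1$. If $\alpha=\pm 1$ then $a=\alpha$, so $F(\alpha)=F(a)$ and the degree is $1$. Otherwise $\alpha\notin\R\supseteq F(a)$, so $\alpha\notin F(a)$ and the quadratic $x^2-2ax+1$ is forced to be the minimal polynomial of $\alpha$ over $F(a)$, giving degree exactly $2$.

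For the final statement, the inclusion $F(a)\subseteq F(\alpha)\cap\R$ is immediate from $F,a\subseteq\R$. For the reverse inclusion, the case $\alpha=\pm 1$ is trivial (both sides equal $F(a)$). When $\alpha\neq\pm 1$, I would use complex conjugation. Since $\bar\alpha=1/\alpha$ is the other root of the minimal polynomial $x^2-2ax+1$, complex conjugation restricts to the nontrivial $F(a)$-automorphism of $F(\alpha)$. Writing an arbitrary element of $F(\alpha)$ uniquely as $p+q\alpha$ with $p,q\in F(a)\subseteq\R$, I would show it is real iff it equals its conjugate $p+q\bar\alpha$, iff $q(\alpha-\bar\alpha)=0$, iff $q=0$ (since $\alpha\neq\bar\alpha$); thus the real elements of $F(\alpha)$ are exactly those of $F(a)$.

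There is essentially no serious obstacle here; the proof is a bookkeeping exercise once one notices the quadratic relation and invokes complex conjugation. The only point requiring a small amount of care is separating the degenerate case $\alpha=\pm 1$ (where the degree drops and $F(\alpha)=F(a)$) from the generic case, and verifying the uniqueness of the representation $p+q\alpha$, which follows because $\{1,\alpha\}$ is an $F(a)$-basis of $F(\alpha)$ whenever $[F(\alpha):F(a)]=2$.
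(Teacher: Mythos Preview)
Your proof is correct and, for the first two assertions, identical to the paper's: both use $a=(\alpha+\alpha^{-1})/2$ to exhibit the quadratic $x^2-2ax+1\in F(a)[x]$ with root $\alpha$, and both handle the degree-$2$ characterization by observing that $F(a)\subseteq\R$ forces $\alpha\in F(a)$ exactly when $\alpha=\pm 1$.

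For the final assertion $F(a)=F(\alpha)\cap\R$, your argument differs from the paper's. You compute directly with complex conjugation and the basis $\{1,\alpha\}$: an element $p+q\alpha$ is fixed by conjugation iff $q=0$. The paper instead uses a pure degree count: from the chain $F(a)\subseteq F(\alpha)\cap\R\subseteq F(\alpha)$ and $[F(\alpha):F(a)]=2$, together with $\alpha\notin\R$ (so the second inclusion is strict), multiplicativity of degrees forces $[F(\alpha)\cap\R:F(a)]=1$. Your approach is more explicit and makes the Galois action visible; the paper's is a one-line tower argument that avoids choosing a basis. Both are entirely standard and equally valid.
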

 \begin{proof}   We note that $a=(\alpha + \alpha^{-1})/2$, showing that $a \in F(\alpha)$ and that $\alpha$ is a zero of $x^2-2ax+1$. This proves the first two assertions. The degree $[F(\alpha):F(a)]<2$ if and only if the two fields are equal. When $F\subseteq\R$, this can happen if and only if the unit complex number $\alpha$ is real, i.e., $\alpha=\pm 1$. Still assuming $F\subseteq\R$, we have  $F(a)\subseteq F(\alpha) \cap \R \subseteq F(\alpha)$.  If $\alpha = \pm 1$, these are all equalities.  Otherwise, $F(\alpha)$ is a degree-two extension of each of the other fields. \end{proof}

% \V

 \begin{lemma}\label{L:comparing definite solutions}  Suppose that the angles $\alpha$ and $\beta$ satisfy $\alpha\neq\pm 1$ and $\beta\neq\pm 1$, and let $a=Re(\alpha)$  and $b=Re(\beta)$. Assume further that $\beta^m=\alpha^{\pm 1}$ (equivalently $T_m(b)=a$).  Then, $\beta \in \Q(\alpha) \Leftrightarrow b\in \Q(a)$.
 \end{lemma}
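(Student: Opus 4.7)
The plan is to reduce both implications to Lemma \ref{L:degree-two extension} applied with $F = \Q$, which supplies the identity $\Q(\alpha)\cap\R = \Q(a)$ together with the degree equalities $[\Q(\alpha):\Q(a)] = [\Q(\beta):\Q(b)] = 2$ (valid because $\alpha \neq \pm 1$ and $\beta \neq \pm 1$). The hypothesis $\beta^m = \alpha^{\pm 1}$ will let me push $\alpha$ into $\Q(\beta)$ when needed, and the quadratic $x^2 - 2bx + 1$ of which $\beta$ is a root will let me push $\beta^{-1}$ into any field that contains $\beta$ and $b$.

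For the forward direction, suppose $\beta \in \Q(\alpha)$. Since $\Q(\alpha)$ is a field and $\beta \neq 0$, we have $\beta^{-1} \in \Q(\alpha)$, so the real number $b = (\beta+\beta^{-1})/2$ lies in $\Q(\alpha)\cap\R$. By Lemma \ref{L:degree-two extension} this intersection equals $\Q(a)$, so $b \in \Q(a)$.

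For the reverse direction, assume $b \in \Q(a)$. Then $\beta$ is a root of $x^2 - 2bx + 1 \in \Q(a)[x]$, so $[\Q(\beta):\Q(a)] \leq 2$, and from $\beta^{-1} = 2b - \beta \in \Q(\beta)$ the hypothesis $\beta^m = \alpha^{\pm 1}$ yields $\alpha = \beta^{\pm m} \in \Q(\beta)$, hence $\Q(\alpha) \subseteq \Q(\beta)$. Combining $\Q(a) \subseteq \Q(\alpha) \subseteq \Q(\beta)$ with $[\Q(\alpha):\Q(a)] = 2$ (Lemma \ref{L:degree-two extension}, using $\alpha \neq \pm 1$) and $[\Q(\beta):\Q(a)] \leq 2$ forces $\Q(\alpha) = \Q(\beta)$, and in particular $\beta \in \Q(\alpha)$. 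I do not foresee any substantive obstacle: the real-versus-complex content is entirely packaged inside Lemma \ref{L:degree-two extension}, and the remaining work is a short degree count in the tower $\Q(a) \subseteq \Q(\alpha) \subseteq \Q(\beta)$. The only minor care needed is to treat the two cases $\beta^m = \alpha$ and $\beta^m = \alpha^{-1}$ uniformly by writing $\alpha = \beta^{\pm m}$.
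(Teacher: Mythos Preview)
Your proof is correct and uses essentially the same ingredients as the paper: both arguments reduce entirely to Lemma~\ref{L:degree-two extension} and a short degree count in the tower $\Q(a)\subseteq\Q(\alpha)\subseteq\Q(\beta)$. The only organizational difference is that the paper packages both directions into a single chain of equivalences via the commutative square $\Q(a)\subset\Q(b)$, $\Q(\alpha)\subset\Q(\beta)$ with degree-two verticals, whereas you handle the forward direction by invoking the identity $\Q(\alpha)\cap\R=\Q(a)$ from the same lemma; this is a cosmetic distinction, not a different route.
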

 \begin{proof}\quad The hypotheses imply that we have a commutative diagram
  \[ \begin{CD}
    \Q(\alpha) @>>> \Q(\beta)\\
    @AAA       @AAA \\
    \Q(a) @>>> \Q(b)
     \end{CD} \]
 with arrows representing field extensions; the vertical arrows represent extensions of degree two.  We then have:
 \[ \beta\in \Q(\alpha)\Leftrightarrow \Q(\beta)=\Q(\alpha)\Leftrightarrow [\Q(\beta):\Q(a)]=2\Leftrightarrow \Q(b)=\Q(a)\Leftrightarrow b\in\Q(a).\]

 \end{proof}

% \V

 \noindent\textbf{Remark:}\quad When $\alpha=\pm 1$, Lemma \ref{L:comparing definite solutions} is false.
 For example, say $\alpha=1$ and $m=3$.  Take $\beta$ to be a non-real cube root of unity.  Then $b=-1/2\in \Q(a)=\Q$, but $\beta\not\in\Q(\alpha)=\Q$.

% \V

\begin{corlem}\label{C:comparing indefinite solutions}\quad  Suppose that $\alpha$ is an angle $\neq \pm 1$  and $a=Re(\alpha)$.  Then some zero of $z^m-\alpha$ belongs to $\Q(\alpha)$ if and only if some zero of $T_m(x)-a$ belongs to $\Q(a)$.
\end{corlem}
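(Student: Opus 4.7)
The plan is to derive the corollary from Lemma \ref{L:comparing definite solutions} together with Lemma \ref{L:trig formula} and Lemma \ref{L:chebyshev preserves unit interval}, matching up a zero $\beta$ of $z^m-\alpha$ with its real part $b=Re(\beta)$, which will be a zero of $T_m(x)-a$, and vice versa. Both directions reduce to checking that the ``exceptional'' hypothesis $\beta\neq\pm 1$ needed to invoke Lemma \ref{L:comparing definite solutions} is satisfied; this is ensured by the standing assumption $\alpha\neq\pm 1$.

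For the forward direction, suppose $\beta\in\Q(\alpha)$ satisfies $\beta^m=\alpha$. Since $\alpha\neq\pm 1$, we have $\beta\neq\pm 1$, so Lemma \ref{L:comparing definite solutions} applies directly (with the exponent $+1$): it yields $b\in\Q(a)$, and Lemma \ref{L:trig formula} tells us $T_m(b)=a$.

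For the converse, start with $b\in\Q(a)$ such that $T_m(b)=a$. Because $\alpha\neq\pm 1$, the real part $a$ satisfies $|a|<1$; Lemma \ref{L:chebyshev preserves unit interval} then forces $|b|\leq 1$, so we may choose a unit complex number $\beta$ with $Re(\beta)=b$. Lemma \ref{L:trig formula} gives $\beta^m=\alpha^{\pm 1}$. The case $\beta=\pm 1$ would force $a=T_m(\pm 1)\in\{\pm 1\}$, contradicting $\alpha\neq\pm 1$, so $\beta\neq\pm 1$ and Lemma \ref{L:comparing definite solutions} yields $\beta\in\Q(\alpha)$. If $\beta^m=\alpha$ we are done; if instead $\beta^m=\alpha^{-1}$, replace $\beta$ by $\beta^{-1}$, which also lies in $\Q(\alpha)$ and satisfies $(\beta^{-1})^m=\alpha$.

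The only subtlety, and the place where the hypothesis $\alpha\neq\pm 1$ is truly used, is in the converse direction, when arguing that $b\neq\pm 1$ so that the chosen $\beta$ avoids the forbidden values; everything else is a routine application of the lemmas already in hand.
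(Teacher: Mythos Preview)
Your proof is correct and follows essentially the same route as the paper's: both directions invoke Lemma~\ref{L:comparing definite solutions} after using Lemma~\ref{L:trig formula} and Lemma~\ref{L:chebyshev preserves unit interval} to set up the correspondence between $\beta$ and $b$. In fact, your argument is slightly more careful than the paper's in the converse direction: the paper stops at $\beta\in\Q(\alpha)$ with $\beta^m=\alpha^{\pm 1}$, whereas you explicitly note that if $\beta^m=\alpha^{-1}$ one should pass to $\beta^{-1}\in\Q(\alpha)$ to obtain an honest zero of $z^m-\alpha$.
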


\begin{proof}

\noindent $\Rightarrow:$\quad  Suppose $\beta^m=\alpha$, and $\beta\in \Q(\alpha)$.  Then,  Lemma \ref{L:trig formula} gives $T_m(Re(\beta))=a$. Since $\beta\neq\pm1$, Lemma \ref{L:comparing definite solutions} applies to give $Re(\beta)\in \Q(a)$.

\noindent $\Leftarrow:$ \quad Suppose $T_m(b)=a$, for $b\in \Q(a)$.  Since $|\alpha|=1$, we have $|a|\leq 1$.  Therefore, by Lemma \ref{L:chebyshev preserves unit interval},  $|b|\leq 1$, so that there is an angle $\beta$ such that $Re(\beta)=b.$
 Lemma \ref{L:trig formula} gives $\beta^m=\alpha^{\pm 1}$. Since $\alpha\neq\pm 1$, then $\beta\neq\pm 1$. Therefore,  Lemma \ref{L:comparing definite solutions} applies to give $\beta\in\Q(\alpha)$.  \end{proof}

% \V

The next  lemma gives our main technical construction.

\begin{lemma}[\textbf{tower-lifting}]\label{L:tower lifting}\quad Suppose $\alpha$ and $\beta$ are angles, with $\alpha,\beta \neq \pm 1$,  and let $a$ and $b$ their real parts, respectively.  Suppose that $a= T_m(b)$ (equivalently, $\beta^m=\alpha^{\pm 1}$ ), so that $\Q(a)\subseteq\Q(b)$ and $\Q(\alpha)\subseteq\Q(\beta)$. Finally, assume that $b$ is constructible over $\Q(a)$. Then there exists a commutative diagram of towers of degree-two extensions
\[ \begin{array}{ccccccccccc}
\Q(\alpha)& = &G_0 & \subset & G_1 & \subset & \ldots & \subset & G_k & = & G_{k-1}(\beta)\\
        & &\uparrow & &  \uparrow & & \ldots & &\uparrow & & \\
\Q(a)& = & F_0 & \subset & F_1 & \subset & \ldots & \subset & F_k & = & F_{k-1}(b)
    \end{array}, \]
in which  each vertical arrow is an inclusion map of a degree-two field extension.
\end{lemma}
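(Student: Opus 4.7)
\quad My plan is to build the $G$-tower as a level-wise ``lift'' of the given $F$-tower, adjoining $\al$ at each level and swapping to $\be$ only at the top.  First I would invoke the Fundamental Theorem of Constructible Numbers to produce a tower
\[\Q(a)=F_0\subset F_1\subset\ldots\subset F_k\subset \R\]
of degree-two real extensions with $b\in F_k$.  Choosing $k$ minimal gives $b\notin F_{k-1}$, and then the inclusions $F_{k-1}\subsetneq F_{k-1}(b)\subseteq F_k$ together with $[F_k:F_{k-1}]=2$ force $F_k=F_{k-1}(b)$ automatically.

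Next I would define
\[G_i := F_i(\al)\ \text{for}\ 0\le i\le k-1,\qquad G_k := F_k(\be).\]
Since each $F_i$ sits inside $\R$ while the unit-modulus complex numbers $\al$ and $\be$ are non-real (as $\al,\be\neq\pm 1$), Lemma \ref{L:degree-two extension} supplies $[G_i:F_i]=2$ at every level; in particular $G_0=\Q(a)(\al)=\Q(\al)$. For each $i<k-1$ the tower law gives
\[[G_{i+1}:F_i]=[F_{i+1}(\al):F_{i+1}]\cdot[F_{i+1}:F_i]=2\cdot 2 = 4,\]
and combined with $[G_i:F_i]=2$ this forces $[G_{i+1}:G_i]=2$. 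The same argument with $\be$ in place of $\al$ applied at the top rung yields $[G_k:F_{k-1}]=4$, hence $[G_k:G_{k-1}]=2$.

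It remains to check $G_k=G_{k-1}(\be)$ and commutativity.  Since $\al=\be^{\pm m}\in\Q(\be)$ and $b=(\be+\be^{-1})/2\in\Q(\be)$, I would compute
\[G_{k-1}(\be)=F_{k-1}(\al,\be)=F_{k-1}(b,\be)=F_k(\be)=G_k,\]
and commutativity of the square diagram is then automatic from the definitions.  The main obstacle I anticipate is ensuring that the vertical degrees never collapse as one climbs the tower; the decisive tool is the final clause of Lemma \ref{L:degree-two extension}, namely $F(\al)\cap\R = F(a)$ whenever $F\subseteq\R$, which guarantees that adjoining $\al$ to any real intermediate field genuinely doubles the degree and so keeps every vertical quadratic throughout the construction.
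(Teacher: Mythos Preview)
Your proof is correct and follows essentially the same approach as the paper: obtain the $F$-tower from the Fundamental Theorem with $k$ minimal, lift level-wise by adjoining $\alpha$, and use Lemma~\ref{L:degree-two extension} together with the tower law to force every horizontal and vertical step to be quadratic. The only cosmetic difference is at the top: you set $G_k:=F_k(\beta)$ directly and compute $G_{k-1}(\beta)=F_{k-1}(\alpha,\beta)=F_k(\beta)$, whereas the paper defines $G_k:=F_k(\alpha)$ uniformly and then argues $\beta\in G_k$ but $\beta\notin G_{k-1}$ (the latter via $G_{k-1}\cap\R=F_{k-1}$, which is where that ``final clause'' you mention is actually used); both routes identify $G_k$ with $F_{k-1}(\beta)$.
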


\begin{proof} \quad Since $b$ is constructible over $\Q(a)$, the Fundamental Theorem of Constructible Numbers asserts that there exists a tower of degree-two field extensions
\[ \Q(a)=F_0\subset F_1\subset \ldots\subset F_k,\]
with $F_k\subset \R$ and $b\in F_k$. We choose such a tower with $k$ minimal.  Then, $b \notin F_{k-1}$, so $F_k= F_{k-1}(b)$, as required.

Define  $G_i$ by $G_i=F_i(\alpha)$. This defines inclusions  designated by the arrows in the diagram.  Since $F_{i-1}\subset F_i$, for each $i= 1,\ldots, k$, we have $G_{i-1}\subseteq G_i$, hence a commutative diagram of
inclusions
(or extensions), as pictured.  Lemma \ref{L:degree-two extension} immediately gives $[G_i:F_i]=2$, for all $i$.

We now compute
\[2[G_i:G_{i-1}] = [G_i:G_{i-1}][G_{i-1}:F_{i-1}]= [G_i:F_{i-1}] = [G_i:F_i][F_i:F_{i-1}]= 4.\]
Therefore, each $G_i$ is a degree-two extension of $G_{i-1}$, as claimed.

Next, we have
\[ F_{k-1}(b)=F_k\subset G_k=F_k(\alpha)=F_{k-1}(b,\alpha)\subseteq F_{k-1}(\beta,\alpha)= F_{k-1}(\beta),\]
from which we extract $F_{k-1}(b)\subset G_k\subseteq F_{k-1}(\beta)$.
Since $[F_{k-1}(\beta):F_{k-1}(b)]=2$ (Lemma \ref{L:degree-two extension})
and $F_{k-1}(b)\neq G_k$, we conclude that $G_k=F_{k-1}(\beta)$, hence, that $\beta \in G_k$.

Finally, we argue that $\beta\notin G_{k-1}$.  For  if $\beta\in G_{k-1}$, then $b= Re(\beta)\in G_{k-1}\cap\R = F_{k-1}$ (Lemma \ref{L:degree-two extension}),  contradicting the minimality of $k$. Therefore, $G_k =G_{k-1}(\beta)$, completing the proof of the lemma.
 \end{proof}

%\V

\subsection{Proof of Theorem \ref{T:constructible zeros}(b)}  \begin{proof}

  Recall that Theorem \ref{T:constructible zeros}(b) asserts that, for $a\in [-1,1]$ and $m$ odd, the equation $T_m(x)= a$ has a solution constructible over $\Q(a)$\quad $\Leftrightarrow$\quad it has a solution in $\Q(a)$. Since the implication $\Leftarrow$ is trivial, we need only prove the implication $\Rightarrow$.

  We first dispose of a simple special case.  Suppose $a=\pm 1$.  Then $T_m(x)=\pm 1$ has a solution in $\Q(\pm1)=\Q$, namely $x=\pm 1$ (Lemma \ref{L:properties of T} (b)). So the desired implication is trivially true.  For the rest of the proof we assume that $a\neq \pm 1$.

 Let $\alpha$ be an angle such  that $a= Re(\alpha)$. Of course, then  $\alpha\neq \pm 1$. If the real number $b$ satisfies  $T_m(b)=a$, then $b\in (-1,1)$ (Lemma \ref{L:chebyshev preserves unit interval}), and $\Q(a)\subseteq \Q(b)$.   There is then an angle $\beta\neq \pm 1$ such that $b= Re(\beta)$ and  $\beta$ is a zero of the polynomial $z^m-\alpha$ (Lemma \ref{L:trig formula}). Therefore, $\Q(\alpha)\subseteq\Q(\beta)$.

  We now use the hypothesis that some solution of $T_m(x) = a$ is constructible over $\Q(a)$, letting $b$ be that solution and $\beta$ as described above. We may then apply tower-lifting (Lemma \ref{L:tower lifting}).  Since $b\in F_k$ and $\beta\in G_k$, we have a commutative diagram of field extensions
  \[ \begin{array}{ccccccc}
 G_0 & = & \Q(\alpha) & \rightarrow & \Q(\beta) & \rightarrow &  G_k\\
        & &\uparrow & &  \uparrow & & \uparrow  \\
 F_0 & = & \Q(a) & \rightarrow & \Q(b) & \rightarrow &   F_k
    \end{array}, \]
 in which each vertical arrow represents a degree-two extension.  The diagram implies that $[\Q(b):\Q(a)]=[\Q(\beta):\Q(\alpha)]$ and that this quantity divides $[G_k:G_0]=[F_k:F_0]=2^k$.  Therefore, $[\Q(b):\Q(a)]=[\Q(\beta):\Q(\alpha)]= 2^j$, for some natural number $j\leq k$.

 Now let $f(z)$ be the minimal polynomial of $\beta$ over $\Q(\alpha)$.  Then, $f(z)$ has degree $2^j$, and $f(z)$ divides $z^m-\alpha$.

The next part of the proof, which essentially occurs in the field $\C$, is inspired by an argument of Van der Waerden (\cite{wae}, p.171).

We begin with a convenient listing  of the zeros of $z^m-\alpha$ : $\beta, \beta\xi,\ldots,\beta\xi^{m-1}$, where  $\xi$ is an arbitrary but fixed  primitive $m^{th}$ root of unity.  Therefore, since the zeros of $f(z)$ form a subset of the set of zeros of $z^m-\alpha$,   we may write the former as $\beta, \beta\xi_1,\ldots, \beta\xi_{2^j-1}$, where the $\xi_i$'s are distinct $m^{th}$ roots of unity ($\neq 1$).  Let $\lambda$ be the constant term of $f(z)$.  Then $\lambda\in \Q(\alpha)$ and $\lambda= \beta^{2^j}\cdot\xi'$, where $\xi'$ is some $m^{th}$ root of unity.  Therefore, $\lambda^m=\beta^{m2^j}=\alpha^{2^j}$.

Next, recall that $m$ is odd, and so $2^j$ and $m$ are relatively prime. Therefore, we have integers $r$ and $s$ such that $2^jr+ms=1$.  It follows that $\alpha=\alpha^{2^jr}\alpha^{ms}=\lambda^{mr}\alpha^{ms}=(\lambda^r\alpha^s)^m$.
Set $\gamma=\lambda^r\alpha^s\in \Q(\alpha)$. Then,  $\gamma^m=\alpha$, i.e., some solution of $z^m-\alpha$ is in $\Q(\alpha)$.

  Finally, we use Corollary \ref{C:comparing indefinite solutions} to return to the original context in $\R$, concluding that some  solution of $T_m(x)-a$ is in $\Q(a)$. This completes the proof.\end{proof}

%\V

\subsection{Proof of Corollary \ref{C:generalized wantzel}(a) and the Generalized Wantzel Theorem}\quad \begin{proof}\quad
(a) Let $a\in [-1,1]\cap\Q$ be as given in Theorem \ref{T:constructible zeros} (a), so that $T_m(x)-a$ has a zero constructible over $\Q(a)=\Q$ but no zero in \Q. Note that the selected $a$ satisfies $-1\leq a\leq 1$. Let $\alpha$ be an angle such that $Re(\alpha)= a$.  Then $\alpha$ is $m$-sectable, by Proposition \ref{P:constructibility criterion}.

(b)  Combining Proposition \ref{P:constructibility criterion} with Theorem \ref{T:constructible zeros} (b), we see that $\alpha$ is\\ $m_{odd}$-sectable if and only if $T_{m_{odd}}(x)-a$ has a zero in $\Q(a)$.  The desired result now follows from the observation that $\alpha$ is $m$-sectable if and only if it is $m_{odd}$-sectable.  (In one direction this is obvious; in the other direction it is true because bisection always holds.) \end{proof}

\V

\section{Density}

\subsection{Absolute values}\quad  As we mention in \S 1, the notion of density that we use is based on the number-theoretic concept of \emph{height} of an algebraic number, which, in turn, may be defined in terms of  a standard set of so-called \emph{absolute values} on a number field.  For example, on the field of rational numbers \Q\, we have the usual absolute value $|\;\;|$, and we have an absolute value $|\;\; |_p$ defined as follows, for each prime number $p$ in \Q: $|0|_p=0$, and $|p^ec/d|=p^{-e}$, for all integers $c,d,e$, with $c$ and $d$ not divisible by $p$.  This collection of absolute values, which we denote by $M_{\Q}$, is often called the canonical set of absolute values on $\Q$. (Of course the absolute value $|\;\;|_p$ is just the valuation $\nu_p$ used earlier but under an alternative name.)

Every number field $K$ has a similar canonical set of absolute values $M_K$, which can be defined as the set of all  extensions of the absolute values in $M_\Q$ to $K$.  Details concerning how this is done can be found, say, in \cite{lan}, Ch.2.  Here we list only some key properties of the absolute values in $M_K$, which are usually denoted by $|\;\; |_v$ or simply by $v$:  In the following, $x$ and $y$ are assumed to range over the field $K$.

\begin{enumerate}
\item $|x|_v\geq 0$, with equality if and only if $x=0$.
\item $|xy|_v=|x|_v|y|_v$.
\item $|x+y|_v\leq |x|_v +|y|_v$.\\
The foregoing define the general concept of an absolute value.
\item For each $|\;\; |_v$ there is a natural number $n_v$, called its weight, such that, for each non-zero $x\in K$, $\prod_{v\in M_K}|x|_v^{n_v} =1$.
\end{enumerate}

It is an easy exercise to see that the canonical set $M_{\Q}$ satisfies the above properties, where the weights all equal $1$.

\subsection{Height} \quad Given a point $P=[x_0,\dots,x_n]$ in the projective space $P^n(K)$, one defines its height as follows:
\[ H_K(P)= \prod_{v\in M_K}\sup\{|x_0|_v^{n_v},\ldots,|x_n|_v^{n_v}\}.\]

Properties (b) and (d) above imply that $H_K(P)$ is well-defined.

Since we are interested only in the special case of $n=1$, in fact in the heights of field elements of $K$, we rewrite the definition to focus on this.  We identify $K$ with the set of $[x_0,x_1]\in P^1(K)$ satisfying $x_0\neq 0$.  Then, we have, for each $x\in K$,
\[H_K(x)=\prod_{v\in M_K}\sup\{1,|x|_v^{n_v}\}.\]

The following properties of $H_K$ are important for us.
\begin{enumerate}
\item For all $x\in K$ and natural numbers $n,\; H_K(x^n)=H_K(x)^n.$
\item For all non-zero $x\in K$, $H_K(x^{-1})=H_K(x)$.
\item For every real $B\in[1,\infty)$, $H_K^{-1}([1,B])$ is finite.
\end{enumerate}

\subsection{Density}\quad  Given two sets $S\subseteq T$ of complex numbers , we now define the $K$-density of $S$ in $T$, as in the introduction.  To insure that the cardinality of $|T\cap H_K^{-1}([1,B])|$ is non-zero, we always assume that $1\in T$. Then, the  $K$-density of $S$ in $T$ is defined  to be the limit as $B\rightarrow \infty$ of the quotients of finite cardinalities,
\[ \delta_K(S,T;B)=\frac{|S\cap H_K^{-1}([1,B])|}{|T\cap H_K^{-1}([1,B])|},\]
provided this limit exists. We denote the limit by $\delta_K(S,T)$.

\subsection{Density of polynomial images}\quad  Now let $f(X)\in K[X]$ be a  degree $d\geq 1$ polynomial.
\begin{prop} [\cite{lan}, p. 82]\label{P:counting estimate} There exist positive real numbers $C_1$ and $C_2$, depending only on $f$ such that, for every $x\in K$,
\[ C_2^{-n}H_K(x)^d\leq H_K(f(x))\leq C_1^nH_K(x)^d,\]
where $n=[K:\Q]$.
\end{prop}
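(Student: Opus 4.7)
The plan is to prove the two inequalities one place at a time and then combine the local estimates using the identity $\sum_{v \text{ archimedean}} n_v = [K:\Q] = n$. Write $f(X) = a_d X^d + a_{d-1} X^{d-1} + \cdots + a_0$ with $a_d \neq 0$, and for each $v \in M_K$ and each $x \in K$ set $\|x\|_v = \sup\{1,|x|_v\}$, so that $H_K(x) = \prod_v \|x\|_v^{n_v}$.

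For the upper bound, the idea is simply the triangle inequality (or the ultrametric inequality). For any $v$ and any $x$, each term $|a_i|_v |x|_v^i$ is at most $\bigl(\sup_j |a_j|_v\bigr)\|x\|_v^d$, so
\[
|f(x)|_v \;\leq\; \epsilon_v \,(d+1)\, \bigl(\sup_j |a_j|_v\bigr)\, \|x\|_v^d,
\]
where $\epsilon_v = 1$ at nonarchimedean $v$ and the factor $(d+1)$ may be dropped at such $v$. Hence $\|f(x)\|_v \leq c_v \|x\|_v^d$ with $c_v = (d+1)^{\epsilon_v}\sup_j\{1,|a_j|_v\}$, where $\epsilon_v = 0$ or $1$ according as $v$ is nonarchimedean or archimedean. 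Raising to $n_v$ and taking the product over $v$, we get $H_K(f(x)) \leq \bigl(\prod_v c_v^{n_v}\bigr)\, H_K(x)^d$. Only finitely many $v$ contribute nontrivially (those dividing some $a_j$ or at infinity), and the archimedean contribution of the factor $(d+1)$ is $(d+1)^{\sum_{v|\infty} n_v} = (d+1)^n$; absorbing the $a_j$-contributions into a constant $C_1$ depending only on $f$ gives $H_K(f(x)) \leq C_1^n H_K(x)^d$.

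For the lower bound, the inequality $H_K(x)^d \leq C_2^n H_K(f(x))$ is proved by a local case split at each place. Fix $v$ and choose a threshold $R_v > 1$, depending only on the $|a_i|_v$ (concretely, something like $R_v = 2d\sup_{i<d}\{|a_i|_v/|a_d|_v\}$ at archimedean $v$, and $R_v = \sup_{i<d}\{|a_i|_v/|a_d|_v,1\}$ at nonarchimedean $v$). If $|x|_v \leq R_v$, then $\|x\|_v^d \leq R_v^d$, a quantity bounded in terms of the $|a_i|_v$ alone. If $|x|_v > R_v$, the leading term $|a_d x^d|_v$ strictly dominates the lower-order terms, and one obtains $|f(x)|_v \geq \tfrac{1}{2}|a_d|_v |x|_v^d$ in the archimedean case (and $|f(x)|_v = |a_d|_v |x|_v^d$ in the nonarchimedean case). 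In either branch, $\|x\|_v^d \leq d_v \, \|f(x)\|_v$ for a local constant $d_v$ depending only on $|a_d|_v$ and the $|a_i|_v$, with $d_v = 1$ for all but finitely many $v$. Raising to $n_v$, multiplying, and again using $\sum_{v|\infty} n_v = n$ to collect the archimedean factor of $2$ into $C_2^n$ yields the desired bound.

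The delicate step is the lower bound: one must ensure that the threshold choice and the ``big $|x|_v$'' estimate fit together uniformly in $x$ at each place, and that the places with $d_v \neq 1$ are exactly the finite set determined by the denominators and supports of the coefficients of $f$, so that $C_2$ depends only on $f$. Once this local dichotomy is in place, assembly of the global bound is routine from properties (b) and (d) of the absolute values, and the exponent $n$ emerges automatically from the sum of archimedean weights.
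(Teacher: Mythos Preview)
The paper does not give its own proof of this proposition; it is simply quoted from Lang's \emph{Fundamentals of Diophantine Geometry}, p.~82, and used as a black box. Your argument is the standard place-by-place estimate that underlies Lang's result, so in substance you are reproducing the reference rather than offering an alternative route.

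Your proof is correct. One small notational slip: in the displayed upper-bound estimate you write ``$\epsilon_v = 1$ at nonarchimedean $v$,'' whereas two lines later you (correctly) set $\epsilon_v = 0$ at nonarchimedean places and $\epsilon_v = 1$ at archimedean ones; the latter is what you actually use. Also, to guarantee $R_v > 1$ at every archimedean place you should take $R_v = \max\{1,\,2d\sup_{i<d}|a_i|_v/|a_d|_v\}$ (otherwise the bound $|x|_v^i \le |x|_v^{d-1}$ used implicitly in the ``large $|x|_v$'' branch can fail). Finally, the claim that the global constant depends only on $f$ (and not on $K$) is justified because $\prod_v \sup_j\{1,|a_j|_v\}^{n_v}$ is the $K$-height of the projective point $[1:a_0:\cdots:a_d]$, which equals the $n$-th power of its absolute height; this is worth saying explicitly, since it is exactly what lets you pull the field dependence out as an exponent $n$.
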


%\V

\begin{propcor}\label{C:bounding polynomial image finite density}  Given an algebraic number field $K$, a polynomial $f(X)\in K(X)$
of degree $d\geq 1$, and a real number $B\in [1,\infty)$, there exists a positive real number $C$, depending only on $f$ and $K$, such that
\[\delta_K(f(K),K;B)\leq \frac{|H_K^{-1}[1,CB^{1/d}]|}{|H_K^{-1}[1,B]|}.\]
\end{propcor}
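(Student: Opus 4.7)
The plan is straightforward: the lower inequality in Proposition \ref{P:counting estimate} converts a height bound on $f(x)$ into a height bound on $x$, and the desired cardinality inequality then follows by picking preimages under $f$. Since $f(K) \cap H_K^{-1}[1,B]$ appears in the numerator of $\delta_K(f(K), K; B)$, what I really need is an upper bound on its cardinality in terms of $|H_K^{-1}[1, CB^{1/d}]|$ for a suitable constant $C$.

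Let $C_1, C_2$ be the constants of Proposition \ref{P:counting estimate}, $n = [K:\Q]$, and set $C := C_2^{n/d}$; this depends only on $f$ and $K$. For any $x \in K$, the lower bound $C_2^{-n} H_K(x)^d \leq H_K(f(x))$ implies that $H_K(f(x)) \leq B$ forces $H_K(x) \leq C B^{1/d}$. Recalling that $H_K(z) \geq 1$ for every $z \in K$, I conclude that $f^{-1}\bigl(H_K^{-1}[1,B]\bigr) \subseteq H_K^{-1}[1, CB^{1/d}]$, where $f$ is viewed as a set-function $K \to K$. Now for each $y \in f(K) \cap H_K^{-1}[1,B]$ I choose an arbitrary preimage $s(y) \in f^{-1}(y)$. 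By the preceding inclusion, $s(y) \in H_K^{-1}[1, CB^{1/d}]$; and $s$ is injective because $f \circ s = \mathrm{id}$. Therefore
\[ \bigl|f(K) \cap H_K^{-1}[1,B]\bigr| \;\leq\; \bigl|H_K^{-1}[1, CB^{1/d}]\bigr|. \]

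Dividing by $|K \cap H_K^{-1}[1,B]| = |H_K^{-1}[1,B]|$ (which is nonzero because $1 \in K$ has height $1$) yields the inequality in the corollary. There is no real obstacle here: the only point requiring even modest care is verifying that the constant $C$ does not depend on $B$, which is immediate from the fact that $C_2$ depends only on $f$ and $n$ depends only on $K$. The substantive analytic input has already been absorbed into Proposition \ref{P:counting estimate}; this corollary is essentially its bookkeeping reformulation in the language of densities, and it is what will pair with a Schanuel-type count of $|H_K^{-1}[1,B]|$ to yield Proposition \ref{P:polynomial image estimate}.
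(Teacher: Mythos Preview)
Your proof is correct and essentially identical to the paper's: both use the lower bound from Proposition~\ref{P:counting estimate} to show that any preimage of a point in $f(K)\cap H_K^{-1}[1,B]$ lies in $H_K^{-1}[1,CB^{1/d}]$ with $C=C_2^{n/d}$, and then deduce the cardinality inequality. The only cosmetic difference is that the paper phrases the count via the inclusion $f(K)\cap H_K^{-1}[1,B]\subseteq f\bigl(H_K^{-1}[1,CB^{1/d}]\bigr)$ and uses $|f(S)|\leq |S|$, whereas you choose an explicit section $s$; these are equivalent bookkeeping devices.
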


\begin{proof}\quad It suffices to show that, for some $C$ as described above,
\[ |f(K)\cap H_K^{-1}[1,B]|\leq |H_K^{-1}[1,CB^{1/d}]|.\]
So, suppose $x\in K$, and $y=f(x)\in H_K^{-1}[1,B]$, i.e., $H_K(y)= H_K(f(x))\leq B$. Then, by Proposition \ref{P:counting estimate}, $ C_2^{-n}H_K(x)^d\leq B$, so that $x\in H_K^{-1}[1, (C_2^nB)^{1/d}]$. Applying $f$ to this membership relation, we get, $y=f(x)\in f(H_K^{-1}[1, (C_2^nB)^{1/d}])$.  Therefore, $f(K)\cap H_K^{-1}[1,B]\subseteq f(H_K^{-1}[1, (C_2^nB)^{1/d}])$, and, hence $|f(K)\cap H_K^{-1}[1,B]|\leq |f(H_K^{-1}[1, (C_2^nB)^{1/d}])|$. It remains only to set $C=(C_2)^{n/d}$ and to observe that $|f(H_K^{-1}[1, (C_2^nB)^{1/d}])|\leq |H_K^{-1}[1, (C_2^nB)^{1/d}]|= |H_K^{-1}[1,CB^{1/d}]|$.
\end{proof}

%\V

We now invoke and apply a (special case of a) theorem of S. Schanuel \cite{sch}.

%\V

\begin{theorem}[\textbf{Schanuel's Theorem}]\label{T:Schanuel's Theorem}  Let $K$ be an algebraic number field, and let $B$ be a real number $\geq 1$. Set $[K:\Q]=n$. Then, there exists a constant $\mathbf{S}_K$, depending only on $K$, such that
\[ |H_K^{-1}[1,B]|= \mathbf{S}_K\cdot B^2 + \mathbf{O}(C(n,B)),\]
where

\[ C(n, B) = \left\{\begin{array}
                    {r@{\quad:\quad}l}
                    B\log B & n=1\\B^{1/n} & n\geq 2
                    \end{array}
                    \right..   \]
\end{theorem}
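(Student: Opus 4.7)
The plan is to reduce the counting of $|H_K^{-1}[1,B]|$ to a lattice-point count in Minkowski space, following the standard projective-geometry-of-numbers strategy. First, represent each $x\in K$ as a class $[x_0:x_1]\in P^1(K)$ with $x_0,x_1\in\m{O}_K$; by property (d) of the absolute values together with the product formula, the height rewrites as
\[H_K(x)=\frac{\prod_{v\mid\infty}\max(|x_0|_v,|x_1|_v)^{n_v}}{N(\fr{a})},\]
where $\fr{a}=(x_0,x_1)$ and $N$ is the absolute norm. The problem then becomes one of counting $\m{O}_K^*$-orbits of pairs $(x_0,x_1)\in\m{O}_K^2\setminus\{(0,0)\}$ satisfying $\prod_{v\mid\infty}\max(|x_0|_v,|x_1|_v)^{n_v}\leq N(\fr{a})\cdot B$, together with the coprimality condition $(x_0,x_1)=\fr{a}$.

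Second, I would stratify by the ideal class of $\fr{a}$ and handle coprimality by M\"obius inversion over sub-ideals, whose limiting sum contributes a factor $1/\zeta_K(2)$ to the main term. For a fixed class representative $\fr{a}_0$, invoke Dirichlet's Unit Theorem to choose a fundamental domain $\m{F}$ for the diagonal action of $\m{O}_K^*/\mu_K$ on $K\otimes_\Q\R\cong\R^{r_1}\times\C^{r_2}$. The problem inside each class reduces to counting lattice points of $\fr{a}_0^2$, a full-rank lattice in $(K\otimes_\Q\R)^2\cong\R^{2n}$, lying in a homogeneously expanding region $R_B$ of volume proportional to $B^2$, intersected with $\m{F}^2$.

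Third, apply a Davenport-type lattice-counting result to $R_B$: for a semi-algebraic region with Lipschitz boundary, the number of lattice points equals $\operatorname{Vol}(R_B\cap\m{F}^2)/\operatorname{covol}(\fr{a}_0^2)$ plus a boundary error. Summing over the $h_K$ ideal classes and combining with the M\"obius factor assembles the constant $\mathbf{S}_K$ as the classical product of invariants
\[\mathbf{S}_K=\frac{h_K R_K\,2^{r_1}(2\pi)^{r_2}}{w_K\,\zeta_K(2)\sqrt{|d_K|}}\]
multiplying $B^2$, while the various tail contributions combine into $\mathbf{O}(C(n,B))$.

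The main obstacle will be extracting the sharp error term while controlling three non-compact sources of tail: the unit fundamental domain (unbounded in logarithmic coordinates, requiring a careful log-height truncation), the M\"obius sum over proper sub-ideals (whose tail controls the convergence to $\zeta_K(2)^{-1}$), and the boundary behavior of $R_B$ under Davenport's estimate. Balancing these three sources is delicate, and it is precisely this balance that produces the dichotomy between the $n=1$ case (in which a $\log B$ enters through the harmonic behavior of the zeta-type sum near $B$) and the $n\geq 2$ case (in which the lattice-boundary term dominates), yielding the stated $C(n,B)$.
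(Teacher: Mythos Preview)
The paper does not give its own proof of this statement: Schanuel's Theorem is simply \emph{invoked} from the literature (the citation \cite{sch}, with the constant also discussed in \cite{lan}), stated in the special $P^1$ case needed, and then applied. There is therefore no argument in the paper to compare your proposal against.

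That said, your outline is a faithful sketch of how Schanuel's proof actually proceeds: rewrite $H_K$ via the product formula so that only the archimedean places and the norm of the content ideal appear; stratify by ideal class; remove the coprimality condition by M\"obius inversion over ideals, producing the factor $1/\zeta_K(2)$; use Dirichlet's Unit Theorem to build a fundamental domain for the $\m{O}_K^{\ast}$-action; and finish with a Davenport-type lattice-point count in a homogeneously expanding region. Two small corrections if you intend to carry this out: in the $P^1$ case the archimedean volume factor $2^{r_1}(2\pi)^{r_2}/\sqrt{|d_K|}$ enters \emph{squared} in $\mathbf{S}_K$, and there is an additional factor $2^{\,r_1+r_2-1}$ (the $N{=}1$ instance of Schanuel's $(N{+}1)^{r_1+r_2-1}$), so your displayed constant is not quite right. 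Also, the $B\log B$ in the $n=1$ case does not come from the unit fundamental domain (there are no nontrivial units for $\Q$) but from the interaction of the M\"obius truncation with the lattice boundary term, essentially the classical error in $\sum_{k\le x}\phi(k)=\tfrac{3}{\pi^2}x^2+O(x\log x)$.
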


%\V

\noindent\textbf{Remarks:} a)  Schanuel computes $\mathbf{S}_K$ explicitly in terms of standard numerical invariants of the field $K$ (see \cite{lan} or \cite{sch}).  For example, when $n=1$, $\mathbf{S}_K$ equals $6/\pi^2$.

b) The term ``$\mathbf{O}(C(n,B))$'' follows the standard ``big oh'' convention.

%\V

\begin{propcor}[\textbf{Proposition \ref{P:polynomial image estimate}}]\label{C:polynomial image finite density estimate}\quad  Let $K, f$, and $B$ be as in Corollary \ref{C:bounding polynomial image finite density} above.  Then, there exist positive real numbers $B_0$ and $E_0$, depending only on $K$ and $f$, such that, for $B\geq B_0$,
\[ \delta_K(f(K),K;B)\leq E_0\cdot B^{(2/d)-2}.\]
Therefore, when $d > 1$,  $\delta_K(f(K),K)=\lim_{B\rightarrow\infty}\delta_K(f(K),K;B)$ exists and equals zero.
\end{propcor}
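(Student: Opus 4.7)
The plan is to chain together the two tools already in hand: the ratio estimate from Corollary \ref{C:bounding polynomial image finite density} and the asymptotic count supplied by Schanuel's Theorem \ref{T:Schanuel's Theorem}. From the corollary we already know
\[ \delta_K(f(K),K;B) \;\leq\; \frac{|H_K^{-1}[1,CB^{1/d}]|}{|H_K^{-1}[1,B]|} \]
for every $B\geq 1$, where the constant $C = C_2^{n/d}$ depends only on $K$ and $f$.  So the proof reduces to estimating this ratio as $B\to\infty$.

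The next step is to feed Schanuel into both the numerator and the denominator.  Applied to the denominator, Schanuel gives
\[ |H_K^{-1}[1,B]| \;=\; \mathbf{S}_K\, B^2 + \mathbf{O}(C(n,B)), \]
and since $C(n,B)/B^2\to 0$ in both cases $n=1$ and $n\geq 2$, we may choose $B_1$ so that for all $B\geq B_1$ the denominator exceeds $(\mathbf{S}_K/2)\,B^2$.  Applied to the numerator, Schanuel yields an upper bound of the form
\[ |H_K^{-1}[1,CB^{1/d}]| \;\leq\; \mathbf{S}_K\, C^2\, B^{2/d} + K_1\cdot C(n, CB^{1/d}) \]
for some constant $K_1$ absorbed in the big-$\mathbf{O}$.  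Forming the quotient, the leading contribution is
\[ \frac{\mathbf{S}_K\, C^2\, B^{2/d}}{(\mathbf{S}_K/2)\,B^2} \;=\; 2C^2\, B^{(2/d)-2}. \]
Taking $E_0$ slightly larger than $2C^2$ and $B_0\geq B_1$ large enough to swallow the lower-order contribution from the error term, we obtain the claimed inequality $\delta_K(f(K),K;B)\leq E_0\cdot B^{(2/d)-2}$.

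The final conclusion is then immediate: when $d>1$ the exponent $(2/d)-2$ is strictly negative, so $B^{(2/d)-2}\to 0$ as $B\to\infty$, forcing $\delta_K(f(K),K)$ to exist and equal zero.  The only real obstacle, and it is essentially bookkeeping, is verifying that the Schanuel remainder $C(n,CB^{1/d})$ in the numerator is truly negligible after division by $B^2$, relative to the main term $B^{(2/d)-2}$.  This requires treating the cases $n=1$ (where $C(1,X)=X\log X$ introduces an additional logarithmic factor) and $n\geq 2$ (where $C(n,X)=X^{1/n}$) separately, but in each case a short power-counting check confirms that the remainder is of strictly smaller order than the main term.  All the other steps are direct substitutions, so the proof is essentially complete once this asymptotic comparison is recorded.
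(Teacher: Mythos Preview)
Your approach is correct and is exactly the computation the paper has in mind: it explicitly says the proof is ``a simple computation using Corollary \ref{C:bounding polynomial image finite density} and Schanuel's Theorem,'' and you have carried out precisely that, bounding the denominator from below by $(\mathbf S_K/2)B^2$, the numerator from above by Schanuel applied at $CB^{1/d}$, and checking that the error term $C(n,CB^{1/d})/B^2$ is of strictly smaller order than $B^{(2/d)-2}$ in both the $n=1$ and $n\ge 2$ cases.
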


We omit the proof, which is a simple computation using Corollary \ref{C:bounding polynomial image finite density} and Schanuel's Theorem.

%\V

\subsection{Intersecting with $[-1,1]$}\quad Our main application of the foregoing results involves angle cosines, i.e., real numbers lying in the interval $[-1,1]$.  Therefore, we should be estimating the size of the sets $H_K^{-1}[1, B]\cap [-1,1]$, as well as densities relative to these.  This subsection shows how to obtain these estimates quite easily  in terms of those for the sets $H_K^{-1}[1, B]$. \emph{Since we are dealing with real numbers in $H_K^{-1}[1, B]$, we shall assume in this subsection that $K$ is a  subfield of $\R$.} Let $K^{\ast}= K\setminus \{0\}$.

Let $I:K^{\ast}\rightarrow K^{\ast}$ denote the inversion $\alpha\mapsto \alpha^{-1}$.  It gives a bijection
\[ K^{\ast}\cap [-1,1]\rightarrow K^{\ast}\setminus (-1, 1),\]
where here $(-1,1)$ denotes the interior of the interval $[-1,1]$.  As noted above in \S 3.2, property (b), $H_K(\alpha)=H_K(I(\alpha)$), for all non-zero
$\alpha\in K$, so $I$ induces a bijection of finite sets
$ H_K^{-1}[1, B]\cap K^{\ast}\cap [-1,1]\rightarrow H_K^{-1}[1, B]\cap(K^{\ast}\setminus (-1,1))$.
These sets intersect in $\{-1,1\}$, and their union is $H_K^{-1}[1, B]\cap K^{\ast}=H_K^{-1}[1, B]\setminus\{0\}$. It follows that
\begin{equation}\label{E:intersecting with [-1,1]}
2|H_K^{-1}[1, B]\cap K^{\ast}\cap [-1,1]| - 2 = |H_K^{-1}[1, B]| -1.
\end{equation}
But $H_K^{-1}[1, B]\cap [-1,1] = (H_K^{-1}[1, B]\cap K^{\ast}\cap [-1,1])\cup \{0\}$, so $|H_K^{-1}[1, B]\cap [-1,1]| - 1 = |H_K^{-1}[1, B]\cap K^{\ast}\cap [-1,1]|$.  Combining this with equation (\ref{E:intersecting with [-1,1]}), we get  the following:
\begin{prop}\label{P:intersecting with [-1,1]} \quad For any real $B\geq 1$ and any number field $K\subset \R$,  \[|H_K^{-1}[1, B]\cap [-1,1]| = \frac{1}{2}(|H_K^{-1}[1, B]|+3).\quad  \square\]
\end{prop}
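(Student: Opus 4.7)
The plan is to exploit the height-preserving involution $I:K^{\ast}\to K^{\ast}$ given by $\alpha\mapsto\alpha^{-1}$, using property (b) of $H_K$ in \S 3.2, namely $H_K(\alpha^{-1})=H_K(\alpha)$. First I would verify that $I$ restricts to a bijection between $K^{\ast}\cap[-1,1]$ and $K^{\ast}\setminus(-1,1)$; this is immediate from the fact that for a nonzero real number $\alpha$, $|\alpha|\leq 1$ if and only if $|\alpha^{-1}|\geq 1$. Combining this with height-invariance, $I$ induces a bijection of finite sets
\[H_K^{-1}[1,B]\cap K^{\ast}\cap[-1,1]\;\longrightarrow\; H_K^{-1}[1,B]\cap\bigl(K^{\ast}\setminus(-1,1)\bigr).\]

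Next I would apply inclusion-exclusion to the two sets above. Their union is $H_K^{-1}[1,B]\cap K^{\ast}$, since every nonzero real number lies in $[-1,1]$ or in $\R\setminus(-1,1)$. Their intersection is exactly $\{-1,1\}$, and both endpoints lie in $H_K^{-1}[1,B]$ because $H_K(\pm 1)=\prod_v\sup\{1,1\}=1$. I would also record that $0\in H_K^{-1}[1,B]$ for every $B\geq 1$, directly from the defining product $H_K(0)=\prod_v\sup\{1,0\}=1$; hence $|H_K^{-1}[1,B]\cap K^{\ast}|=|H_K^{-1}[1,B]|-1$. Since the two sets in the union have equal cardinality via the bijection above, inclusion-exclusion gives
\[2\,|H_K^{-1}[1,B]\cap K^{\ast}\cap[-1,1]|-2=|H_K^{-1}[1,B]|-1.\]

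Finally, I would observe that $H_K^{-1}[1,B]\cap[-1,1]$ is obtained from $H_K^{-1}[1,B]\cap K^{\ast}\cap[-1,1]$ by adjoining the single point $0$. Solving the displayed equation for the left-hand cardinality and adding $1$ yields
\[|H_K^{-1}[1,B]\cap[-1,1]|=\tfrac{1}{2}\bigl(|H_K^{-1}[1,B]|-1+2\bigr)+1=\tfrac{1}{2}\bigl(|H_K^{-1}[1,B]|+3\bigr),\]
as required. There is no serious obstacle here; the only subtlety worth flagging is checking that $0$ and $\pm 1$ all have height $1$ (so that they are automatically in $H_K^{-1}[1,B]$), which follows cleanly from the definition of $H_K$ as a product of suprema with $1$.
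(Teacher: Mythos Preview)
Your proof is correct and follows essentially the same approach as the paper: both use the height-preserving inversion $I(\alpha)=\alpha^{-1}$ to set up a bijection between $H_K^{-1}[1,B]\cap K^{\ast}\cap[-1,1]$ and $H_K^{-1}[1,B]\cap(K^{\ast}\setminus(-1,1))$, apply inclusion-exclusion with intersection $\{-1,1\}$ and union $H_K^{-1}[1,B]\setminus\{0\}$, and then adjoin $0$ to finish. Your version is slightly more explicit in verifying that $0$ and $\pm 1$ have height $1$, but otherwise the arguments coincide.
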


%\V

The following corollary is essentially Proposition \ref{P:polynomial image estimate} ``relativized down to $[-1,1]$.''

%\V

\begin{propcor}\label{C:intersecting with [-1,1]}\quad  Let $K, f$, $d$, and $B$ be as in Corollary \ref{C:bounding polynomial image finite density} above, with $K\subset \R$.   Assume additionally that, for all real $x$, $|x|\leq 1$ if and only if $|f(x)|\leq 1$ (as is the case for Chebyshev polynomials). Then, there exist positive real numbers $B_1$ and $E_1$, depending only on $K$ and $f$, such that, for $B\geq B_1$,
\[ \delta_K(f(K)\cap [-1,1],  [-1,1];B)\leq E_1\cdot B^{(2/d)-2}.\]
Therefore, when $d > 1$,  $\delta_K(f(K)\cap [-1,1],  [-1,1])$ exists and equals zero.
\end{propcor}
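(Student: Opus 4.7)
The plan is to adapt the proof of Corollary \ref{C:polynomial image finite density estimate} by splitting the ratio
\[\delta_K(f(K)\cap[-1,1],\,[-1,1];B) = \frac{|f(K)\cap [-1,1]\cap H_K^{-1}[1,B]|}{|[-1,1]\cap H_K^{-1}[1,B]|}\]
into an estimate for numerator and denominator, each of which is already accessible via the tools built earlier. The denominator is handled directly by Proposition \ref{P:intersecting with [-1,1]}: it equals $\tfrac{1}{2}(|H_K^{-1}[1,B]| + 3)$, which by Schanuel's Theorem is asymptotic to $\tfrac{\mathbf{S}_K}{2} B^2$.

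For the numerator, I would first use the hypothesis ``$|x|\leq 1 \Leftrightarrow |f(x)|\leq 1$'' to observe that $f(K)\cap [-1,1] = f(K\cap [-1,1])$: any $y$ in the numerator is $f(x)$ for some $x\in K$, and $|y|\leq 1$ forces $|x|\leq 1$ by the hypothesis. Then, as in the proof of Corollary \ref{C:bounding polynomial image finite density}, Proposition \ref{P:counting estimate} gives $H_K(x)^d \leq C_2^n H_K(f(x)) \leq C_2^n B$, so $x \in H_K^{-1}[1, CB^{1/d}]\cap [-1,1]$ with $C = C_2^{n/d}$. Applying $f$ can only decrease cardinality, so
\[|f(K)\cap [-1,1]\cap H_K^{-1}[1,B]| \;\leq\; |H_K^{-1}[1, CB^{1/d}]\cap [-1,1]| \;=\; \tfrac{1}{2}\bigl(|H_K^{-1}[1, CB^{1/d}]|+3\bigr),\]
again invoking Proposition \ref{P:intersecting with [-1,1]}. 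By Schanuel's Theorem, the right-hand side is asymptotic to $\tfrac{\mathbf{S}_K}{2} C^2 B^{2/d}$.

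Dividing, the leading-order behavior of the ratio is $C^2 B^{2/d - 2}$. It then suffices to choose $B_1$ large enough so that the $\mathbf{O}(C(n,\,\cdot\,))$ error terms from Schanuel's Theorem in both numerator and denominator are absorbed (say, into a doubling of the leading coefficient), and to take $E_1$ to be twice $C^2$ (or any similar explicit value); this is a routine ``big oh'' computation identical in form to the one omitted in Corollary \ref{C:polynomial image finite density estimate}. The exponent $2/d - 2$ is negative when $d>1$, giving the claimed density-zero conclusion. The only real pitfall---scarcely an obstacle---is to be sure the hypothesis on $f$ is invoked in the reduction $f(K)\cap [-1,1] = f(K\cap [-1,1])$, since without it one could not control which preimages $x$ of a given $y\in [-1,1]$ lie in $[-1,1]$; everything else is bookkeeping built on top of Proposition \ref{P:counting estimate}, Proposition \ref{P:intersecting with [-1,1]}, and Schanuel's Theorem.
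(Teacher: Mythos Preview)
Your proof is correct and follows essentially the same route as the paper: use the hypothesis $|x|\leq 1 \Leftrightarrow |f(x)|\leq 1$ to ensure the relevant preimages lie in $[-1,1]$, bound their height via Proposition~\ref{P:counting estimate}, convert the resulting $[-1,1]$-restricted counts to unrestricted ones via Proposition~\ref{P:intersecting with [-1,1]}, and finish with Schanuel's Theorem. The only cosmetic difference is that you package the first step as the set identity $f(K)\cap[-1,1]=f(K\cap[-1,1])$, whereas the paper tracks an individual element $y=f(x)$; the substance is identical.
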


\begin{proof}\quad  Suppose, as in the proof of Corollary \ref{C:bounding polynomial image finite density}, $x\in K$ and $y = f(x)\in H_K^{-1}[1,B]$, but suppose additionally that $y\in [-1,1]$.  Then we can conclude that $x\in H_K^{-1}[1,(C_2^nB)^{1/d}]\cap [-1,1]$, and so $y=f(x)\in f(H_K^{-1}[1,(C_2^nB)^{1/d}]\cap [-1,1])$.  Therefore,
\[ |H_K^{-1}[1,B]\cap f(K)\cap [-1,1]|\leq |f(H_K^{-1}[1,(C_2^nB)^{1/d}]\cap [-1,1])|\leq |H_K^{-1}[1,(C_2^nB)^{1/d}]\cap [-1,1]|.\]
We use this inequality, as in Corollary \ref{C:bounding polynomial image finite density}, to get an upper bound for $\delta_K$:
\[\delta_K(f(K)\cap [-1,1], [-1,1])\leq \frac{|H_K^{-1}[1,(C_2^nB)^{1/d}]\cap [-1,1]|}{|H_K^{-1}[1,B]\cap[-1,1]|}.\]
Next , we substitute the  result of Proposition \ref{P:intersecting with [-1,1]} and simplify slightly:
\[\delta_K(f(K)\cap [-1,1], [-1,1])\leq \frac{|H_K^{-1}[1,(C_2^nB)^{1/d}]|+3}{|H_K^{-1}[1,B]|+ 3}.\]
At this point, we apply  Schanuel's Theorem to the numerator and denominator and conclude with a straightforward computation.
\end{proof}

%\V

\subsection{The K-density of m-Sect  in $[-1,1]$: proof of Corollary \ref{C:m-sect estimate}}\quad We begin by reminding the reader of how \textbf{m-Sect} $\cap\  K$ relates to the image of $K$ under the Chebyshev polynomial $T_{m_{odd}}$, where $m_{odd}$ is the largest odd divisor of $m$. Recall that $\mathbf{m-Sect}= \mathbf{m_{odd}-Sect}$.  Now choose any element $a \in \mathbf{m-Sect} \cap\ K =  \mathbf{m_{odd}-Sect} \cap\ K$. Then, the Generalized Wantzel Theorem implies that $a$ is in   $T_{m_{odd}}(K)\cap[-1,1]$.  That is,
\[ \textbf{m-Sect}\cap K\subseteq T_{m_{odd}}(K)\cap[-1,1].\]
It follows that
\[\delta_K(\textbf{m-Sect}, [-1,1]; B)\leq \delta_K(T_{m_{odd}}(K)\cap[-1,1],  [-1,1]; B).\]
Since $|T_{m_{odd}}(x)|\leq 1 \Leftrightarrow
 |x|\leq 1$ (Lemma \ref{L:chebyshev preserves unit interval}), we can use Corollary \ref{C:intersecting with [-1,1]} to  get an upper bound for the density $\delta_K(\textbf{m-Sect}, [-1,1]; B)$:  namely, there exist constants $B_2$ and $E_2$, depending only on $m$ and $K$, such that, for $B\geq B_2$,

\[ \delta_K(\textbf{m-Sect}, [-1,1]; B)\leq E_2\cdot B^{(2/m_{odd})-2}.\]

 This concludes our proof of Corollary \ref{C:m-sect estimate}.

 \V

\end{document}